\theoremstyle{plain}
\newtheorem{thm}{Theorem}[section]
\newtheorem{lem}[thm]{Lemma}
\newtheorem{defn}[thm]{Definition}
\newcommand{\e}{\mathrm{e}}
\newcommand{\E}{\mathbb{E}}
\renewcommand{\P}{\mathbb{P}}
\newcommand{\Z}{\mathbb{Z}}
\newcommand{\N}{\mathbb{N}}
\newcommand{\R}{\mathbb{R}}
\newcommand{\C}{\mathcal{C}_2}
\renewcommand{\O}{\mathcal{O}}
\renewcommand{\d}{\mathrm{d}}
\newcommand{\norm}[1]{\lVert #1 \rVert}
\newcommand{\abs}[1]{\lvert#1\rvert}
\newcommand{\indicator}[1]{\mathds{1}_{#1}}
\renewcommand{\labelenumi}{\upshape(\alph{enumi})}
\renewcommand{\theenumi}{(\alph{enumi})}
\newcommand{\p}{\dfrac{\partial}{\partial z}}
\newcommand{\pp}{\dfrac{\partial^2}{\partial z^2}}
\newcommand{\pr}[1]{\dfrac{\partial^{#1}}{\partial z^{#1}}}
\def\blfootnote{\xdef\@thefnmark{}\@footnotetext}
\newcommand{\arxivnotice}[1]
{\blfootnote{\vskip-3pt\noindent\fbox{\parbox{\linewidth}{\small#1}}}}
\title{Rotor-Router Aggregation on the Comb}
\title{Rotor-Router Aggregation on the Comb}
\author{Wilfried Huss\footnote{University of Siegen, Germany. Research supported by the FWF program FWF-P19115-N18}, 
Ecaterina Sava\footnote{Graz University of Technology, Austria. Research supported by the FWF program W 1230-N13}}
\begin{document}

\maketitle
\arxivnotice{This is an electronic reprint of the original article published in \textit{\href{www.combinatorics.org}{The Electronic Journal of Combinatorics}}, 2011, \textbf{18(1)}, P224. 
This reprint differs from the original in pagination and typographic detail.}
\begin{abstract}
We prove a shape theorem for rotor-router aggregation on the comb, for a specific initial
rotor configuration and clockwise rotor sequence for all vertices.
Furthermore, as an application of rotor-router walks, we describe the harmonic measure
of the rotor-router aggregate and related shapes, which is useful in the study of other
growth models on the comb. We also identify the shape for which the harmonic measure is uniform.
This gives the first known example where the rotor-router cluster has non-uniform harmonic measure,
and grows with different speeds in different directions.
\end{abstract}
\begin{center}
{\small\textbf{Keywords:} growth model, comb, rotor-router, asymptotic shape, harmonic measure.}
\end{center}
\section{Introduction}
Rotor-router walks are deterministic analogues to random walks,
which have been introduced into the physics literature under the
name \emph{Eulerian walks} by \textsc{Priezzhev, D.Dhar et al}
\cite{PhysRevLett.77.5079} as a model of \emph{self organized
criticality}, a concept established by \textsc{Bak, Tang and
Wiesenfeld} \cite{back_tang_wiesenfeld}.

In a rotor-router walk on a graph $G$, for each vertex $x\in G$ a cyclic
ordering $c(x)$ of its neighbours is chosen. At each vertex we have an
arrow (rotor) pointing to one of the neighbours of the vertex. A particle
performing a \emph{rotor-router walk} carries out the following procedure
at each step: first it changes the rotor at its current position $x$ to point
to the next neighbour of $x$ defined by the ordering $c(x)$, and then the 
particle moves to the neighbour the rotor is now pointing at.

The behaviour of rotor-router walks is in some respects remarkably
close to that of random walks. See for example \textsc{Cooper and Spencer}
\cite{cooper_spencer_2006} and \textsc{Doerr and Friedrich} \cite{doerr_friedrich_2007}.

In the present paper, we are interested in a process called \emph{rotor-router
aggregation}, defined as follows. Choose a root vertex $o\in G$ and let 
$R_1 = \lbrace o \rbrace$. The sets $R_n$ are defined recursively, by
\begin{equation*}
R_{n+1} = R_{n} \cup \lbrace z_n \rbrace \quad\text{ for } n \geq 1,
\end{equation*}
where $z_n$ is the first vertex outside $R_n$ that is visited
by a rotor-router walk started at the origin $o$.
The rotor configuration is not changed when a new particle is started
at the origin. We will call the set $R_n$ the \emph{rotor-router cluster}
of $n$ particles.

Rotor-router aggregation on the Euclidean lattice $\Z^d$ has been studied 
by \textsc{Levine and Peres} \cite{peres_levine_strong_spherical},
who showed that the rotor-router cluster $R_n$ forms
a ball in the usual Euclidean distance. On the homogeneous tree \textsc{Landau and Levine}
\cite{landau_levine_2009} proved that, under certain conditions on the initial
configuration of rotors, the rotor-router cluster $R_n$ forms a perfect ball with respect to the
graph metric, whenever it has the right amount of particles. \textsc{Kager and Levine}
\cite{kager_levine_rotor_aggregation} studied the shape of the rotor-router cluster on a
modified two-dimensional lattice, which they call the \emph{layered square lattice}.

In each of these examples the fluctuations of the cluster around the limiting shape are much
smaller in rotor-router aggregation than in the corresponding random growth model called
\emph{internal diffusion limited aggregation (IDLA)}, where particles perform independent
random walks before they settle and attach to the cluster. In the case of the homogeneous tree
and the layered square lattice, the fluctuations even vanish completely in the deterministic
model.

We will use the technique introduced in \cite{kager_levine_rotor_aggregation} in order to
study rotor-router aggregation on the two-dimensional comb $\C$, which is the spanning
tree of the two-dimensional lattice $\Z^2$, obtained by removing all horizontal
edges of $\Z^2$ except the ones on the $x$-axis.
In other words, the graph $\C$ can be constructed from a two-sided infinite path $\Z$
(the "\emph{backbone}" of the comb), by attaching copies of $\Z$ (the "\emph{teeth}") at
every vertex of the backbone.

\begin{figure}
\centering
\subfigure[]
{
\begin{tikzpicture}
\foreach \x in {-2,...,2}
{
   \draw (\x, 2.8) -- (\x, -2.8);
   \foreach \y in {-2,...,2}
   \fill (\x,\y) circle (1.5pt);
}

\draw (-2.8, 0) -- (2.8, 0);

\coordinate[label=135:$o$] (O) at (0,0);
\coordinate[label=0:{$z=(x,y)$}] (Z) at (2, 1);
\end{tikzpicture}
\label{fig:comb}
}
\subfigure[]
{
\begin{tikzpicture}
\draw[black!15, dotted] (-2.8, 0) -- (2.8, 0);
\foreach \x in {-2,...,2}
{
    \draw[black!15, dotted] (\x, -2.8) -- (\x, 2.8);
    \foreach \y in {-2,...,2}
        \fill[black] (\x, \y) circle (1.5pt);
}

\coordinate[label=135:{$o$}] (origin) at (0,0);

\foreach \x in {-2,...,2}
{
    \foreach \y in {-2,...,-1}
        \draw[-stealth', line width=0.8pt, black] (\x, {\y-0.1}) -- (\x, {\y-0.6});

    \foreach \y in {1,...,2}
        \draw[-stealth', line width=0.8pt, black] (\x, {\y+0.1}) -- (\x, {\y+0.6});
}

\foreach \x in {-2,...,-1}
    \draw[-stealth', line width=0.8pt, black] ({\x-0.1}, 0) -- ({\x-0.6}, 0);

\foreach \x in {0,...,2}
    \draw[-stealth', line width=0.8pt, black] ({\x+0.1}, 0) -- ({\x+0.6}, 0);

\end{tikzpicture}
\label{initial_rotor_configuration}
}
\caption{\subref{fig:comb} The two-dimensional comb $\C$.
         \subref{initial_rotor_configuration} The initial rotor configuration $\rho_0$.}
\end{figure}

We use the standard embedding of the comb into the two-dimensional Euclidean lattice $\Z^2$,
and use Cartesian coordinates $z = (x,y)\in \Z^2$ to denote vertices of $\C$. The vertex $o = (0,0)$
will be the \emph{root vertex}, see Figure \ref{fig:comb}. For functions $g$ on the vertex set of $\C$ we
will often write $g(x,y)$ instead of $g(z)$, when $z = (x,y)$.

While $\C$ is a very simple graph, it has some remarkable properties. For example,
the \emph{Einstein relation} between the spectral-, walk- and fractal-dimension is
violated on the comb, see \textsc{Bertacchi} \cite{bertacchi_comb}. \textsc{Peres and Krishnapur}
\cite{peres_krishnapur_collide} showed that on $\C$ two independent simple random walks meet only
finitely often; this is the so-called \emph{finite collision property}.

The structure of this paper is as follows. In Section \ref{sec:rotor-router} we recall some
preliminary results due to \textsc{Kager and Levine} \cite{kager_levine_rotor_aggregation}, which 
will be applied in order to prove the main result of the paper.
In Section \ref{sec:initial_rotor_cfg} we describe the shape of the rotor-router cluster on $\C$, 
for the initial rotor configuration $\rho_0$ in Figure \ref{initial_rotor_configuration}. Define
\begin{equation}\label{eq:B_m}
B_m = \big\lbrace (x,y)\in\C:\: \abs{x} \leq m, \abs{y} \leq h(m-\abs{x})\big\rbrace\quad\text{for }m \in\N,
\end{equation}
for some function $h:\N_0\to\N_0$. The main result of the paper is the following.

\begin{thm}\label{thm:rotor_shape}
Let $R_n$ be the rotor-router cluster of $n$ particles on the comb $\C$, 
with initial rotor configuration as in Figure \ref{initial_rotor_configuration} 
and clockwise rotor sequence for all $x\in \C$. Let $B_m$ be as in \eqref{eq:B_m} with
\begin{equation*}
h(x) = \left\lfloor \frac{(x+1)^2}{3} \right\rfloor.
\end{equation*}
Then, for all $m\geq 0$ and $n_m = \abs{B_m}$, the rotor-router cluster $R_{n_m}$ satisfies
$R_{n_m} = B_m$.
\end{thm}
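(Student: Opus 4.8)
The plan is to prove Theorem~\ref{thm:rotor_shape} by induction on $m$, carrying along the whole rotor configuration rather than only the cluster. Concretely, I would prove the stronger statement that for every $m\ge 0$, after $n_m=\abs{B_m}$ particles have been aggregated the cluster equals $B_m$ \emph{and} the rotor configuration equals an explicit configuration $\rho_m$. The base case $m=0$ is immediate ($B_0=\{o\}$, $n_0=1$, $R_1=\{o\}$), and for $m=1$ one checks directly that only the rotor at $o$ is ever turned, that it is turned exactly four times (a full cycle), and that the four new vertices are the neighbours of $o$, so that $R_5=B_1$ and $\rho_1=\rho_0$. This makes the ``outward pointing'' configuration of Figure~\ref{initial_rotor_configuration} the natural candidate for $\rho_m$ (possibly with controlled corrections on the boundary $\partial B_m$); in any case the induction must produce $\rho_m$ explicitly, because it is exactly $\rho_m$ that governs how the next batch of particles flows.

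For the inductive step I would invoke the abelian property recalled in Section~\ref{sec:rotor-router}: neither $R_{n_{m+1}}$ nor the terminal rotor configuration depends on the order in which the $n_{m+1}-n_m$ new particles are released, so one may choose a convenient schedule. Since $\C$ is a tree, the dynamics decouples into two elementary ingredients: (i) the motion of a particle inside one tooth is a rotor-router walk on a finite path, hence explicitly solvable --- given the current rotor state on that tooth, a particle entering at the base either returns to the base or settles at the current tip, with an explicit rotor update; and (ii) the way the clockwise rotor distributes successive particles among the four neighbours of a backbone vertex. Starting from $(B_m,\rho_m)$ and routing the new particles one at a time, I would verify that each of them settles on $B_{m+1}\setminus B_m$, that no particle ever leaves $B_{m+1}$, and that the configuration after the last one is $\rho_{m+1}$. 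The inclusion $R_{n_{m+1}}\subseteq B_{m+1}$ is precisely the statement that nothing escapes, and since $\abs{R_{n_{m+1}}}=n_{m+1}=\abs{B_{m+1}}$ by construction, this forces $R_{n_{m+1}}=B_{m+1}$.

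The real content is the flux balance that selects $h$. Let $u_m$ denote the odometer of the aggregation with $n_m$ particles. On a tree the net flow across each edge is forced by the final cluster --- every cluster vertex absorbs exactly one particle --- whereas the gross flow, and hence $u_m$, is obtained by splitting $u_m(z)$ as evenly as possible among the $\deg z$ neighbours of $z$ in the prescribed cyclic order; this is exactly where the floor functions enter. Along a tooth this makes $u_m(x,\cdot)$ essentially a concave quadratic in $y$ that vanishes just beyond the tip $y=\pm\big(h(m-\abs{x})+1\big)$. At a backbone vertex two of the four incident edges enter a tooth (and reflect flow back) while two continue along the backbone, and the conservation law, a discrete Laplace equation of the shape
\[
\widetilde\Delta u_m \;=\; \indicator{B_m}\;-\;n_m\,\indicator{\{o\}},
\]
with $\widetilde\Delta$ an appropriate degree-normalised Laplacian of $\C$, turns into a second-order recursion for the restriction $x\mapsto u_m(x,0)$; solving it, with the boundary condition supplied by the empty tooth at $\abs{x}=m$, is what produces $h(k)=\big\lfloor (k+1)^2/3\big\rfloor$. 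Heuristically the denominator $3$ is the footprint of combining the curvature $2$ of the quadratic tooth profile with the degree-$4$ backbone vertex under the clockwise rule: one finds $h(k)+1\sim (k+1)^2/3$ from $f(x-1)^2+f(x+1)^2-2f(x)^2\sim 4f(x)$, which the cubic growth of $n_m$ corroborates.

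I expect the main obstacle to be this rotor bookkeeping rather than any single estimate: one has to propagate the exact form of $\rho_m$ through the inductive step, and because of the floors the configurations $\rho_m$ and the intermediate clusters are only asymptotically self-similar from one scale to the next, so the invariant must be chosen carefully, presumably distinguishing cases according to a small residue of $m$. The degree-$4$ backbone vertices and the distinguished origin are where this is most delicate; away from them the dynamics is a one-dimensional rotor walk and is routine. A cleaner route, worth attempting in parallel, is to bypass the explicit dynamics altogether: guess the terminal configuration $\rho_m$ together with an explicit legal routing of the particles into $B_m$, and invoke the uniqueness part of the abelian lemmas from Section~\ref{sec:rotor-router} to conclude that this guessed execution is the genuine one. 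That reduces the whole proof to checking that a single, explicitly written flux field is nonnegative, integer valued, compatible with the cyclic rotor orders, and has the prescribed divergence.
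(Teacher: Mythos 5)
Your closing ``cleaner route'' is essentially what the paper does, and it is the only one of your two plans that is really viable: the authors guess an explicit odometer function $u_m$ (equations \eqref{eq:comb_rotor_odometer}--\eqref{eq:comb_odometer_shifted}, built from exactly the quadratic-plus-linear tooth profile $f(h-y)+e(r-y)$ you anticipate) and verify it via the Friedrich--Levine characterization (Theorem \ref{thm:rotor_odometer_properties}), which rests on the Strong Abelian Property (Theorem \ref{thm:strong_abelian_property}). Two points where the paper's version is lighter than what you propose: (i) no legality or nonnegativity of a routing needs to be checked --- the Strong Abelian Property lets one apply $F^{u_m}$ blindly and verify only that the resulting particle configuration equals $\indicator{B_m}$ and that the final rotor configuration is acyclic; and (ii) there is no induction on $m$ in the main lemma at all --- the verification is done in one shot for each $m$, with a case split (interior of a tooth, $y=1$, backbone, origin) and a residue-mod-$3$ analysis very much as you predict.

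Your primary plan, by contrast, has a concrete gap. The terminal rotor configuration $\rho_m$ is \emph{not} ``$\rho_0$ with controlled corrections on the boundary'': on each tooth the first $r(m-\abs{x})$ rotors point inward and the rest outward, where $r$ is the genuinely nontrivial function \eqref{eq:def_r_x} (itself split mod $3$); see Figure \ref{fig:symmetric_config_6_7}. Without that formula the inductive invariant cannot even be stated, and tracking the $n_{m+1}-n_m$ new particles one at a time through such a configuration is hopeless: each particle revisits backbone vertices many times and the intermediate rotor states admit no clean description. (The figures do show that $u_{m+1}$ arises from $u_m$ by a shift-and-insert, so a scale-to-scale induction could in principle be organized, but the authors sidestep it.) The practical advice is therefore: drop the particle-by-particle induction, write down $u_m$ and $r(x)$ explicitly, and run the odometer-verification argument --- that is where all the real work lives, namely the mod-$3$ arithmetic on the backbone, the handful of exceptional points $x\in\{1,2,5,8\}$, and the acyclicity check at the exceptional backbone vertex $x=2$ where the odometer is $\equiv 3 \pmod 4$.
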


The main idea of the proof is to study rotor-router aggregation on the non-negative integers $\N_0$, 
and then to glue different copies of $\N_0$ on the \emph{``backbone''} of $\C$, in order 
to get information on the behaviour of the rotor-router cluster.
In the upcoming paper \cite{huss_sava_aggregation} the authors study IDLA on the comb.
This gives another case where all fluctuations disappear in rotor-router aggregation
when compared with IDLA.

As an application of rotor-router walks, in Section \ref{sec:harm_measure}
we give a method to describe the \emph{harmonic measure} of generic sets $B_m$, of the form
\eqref{eq:B_m}. For this, let
\begin{equation}\label{eq:boundary_Bm}
\partial B_m=\{z\in B_m:\ \exists \text{ neighbour } y \text{ of } z \text{ in }\C, \text{ such that }y\notin B_m \}
\end{equation}
be the \emph{inner boundary} of the set $B_m$.
The harmonic measure of $B_m$ is defined as the exit distribution of a \emph{simple random walk} 
from the set $B_m$. For $z\in\partial B_m$, we denote by $\nu_{m,o}(z)$ the probability that a 
simple random walk started at the origin $o\in \C$ hits $\partial B_m$ in $z$.
In order to compute $\nu_{m,o}(z)$, we consider a special rotor-router process, 
which allows us to obtain exact results in several cases. In particular,
we identify the subsets of the comb for which the harmonic measure is uniform. 

\begin{thm}
\label{thm:unif_harm_measure}
Let $B_m\subset \C$ be as in \eqref{eq:B_m}, with $h(x) = x^2$. Then 
the harmonic measure $\nu_{m,o}$ of $B_m$ is the uniform measure on $\partial B_m$.
\end{thm}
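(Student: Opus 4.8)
The plan is to compute the harmonic measure $\nu_{m,o}$ explicitly on the set $B_m$ with $h(x)=x^2$ by exploiting the tree structure of the comb and the one-dimensional nature of simple random walk on each tooth. First I would observe that a simple random walk started at $o$ must reach $\partial B_m$ by first travelling along the backbone to some vertex $(x,0)$ with $|x|\le m$, and then either exiting along the backbone at $(\pm m,0)$ or climbing a tooth to one of its two endpoints. Because the comb is a tree, the walk's behaviour decomposes cleanly: the probability of the backbone-component of the trajectory is governed by a gambler's-ruin computation on the path $\{-m,\dots,m\}$, where at each interior backbone vertex $(x,0)$ the walk has probability $1/4$ to step up, $1/4$ to step down, and $1/4$ each to move left or right along the backbone; the two tooth-directions together behave like a single ``absorbing'' option with the appropriate weight, since once the walk enters a tooth at $(x,0)$ it will, with probability one, exit at one of the two tips $(x,\pm x^2)$, each with probability $1/2$ by symmetry (and by the standard one-dimensional ruin estimate on a path of length $2x^2$).

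Next I would set up the recursion for the hitting probabilities. Let $p(x)$ denote the probability that the backbone-excursion from $o$ ever reaches backbone vertex $(x,0)$ before being absorbed (either at $(\pm m,0)$ or by committing to a tooth). Conditioning on the first step and using the tree property, one gets a linear second-order recurrence in $x$ for the relevant quantities, whose solution is affine in $x$ away from the special structure imposed by the tooth-heights. The key point is that the ``effective conductance'' seen at backbone vertex $(x,0)$ toward its tooth is the reciprocal of the tooth's length, i.e.\ proportional to $1/x^2$ (for $x\neq 0$) and similarly on the negative side; the factor $h(x)=x^2$ in the definition of $B_m$ is chosen precisely so that these conductances telescope. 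Carrying out the electrical-network computation — assigning unit resistance to each edge, so each tooth at $(x,0)$ is a resistor of resistance $x^2$ (or $2x^2$ split between its two arms) hanging off the backbone node — I would compute the current flowing out of each boundary vertex when unit voltage is applied appropriately, and the claim reduces to checking that the outflow is the same at every vertex of $\partial B_m$.

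The main obstacle I anticipate is bookkeeping at the two ``ends'' of the backbone, $(\pm m,0)$, and at the root $(0,0)$ (where the tooth has height $h(0)=0$, so there is no tooth and the walk can only move along the backbone or be immediately at a boundary point if $m=0$): the boundary $\partial B_m$ consists of the backbone tips $(\pm m,0)$, the tooth tips $(x,\pm x^2)$ for $1\le|x|\le m$, and possibly the top of the tooth at $x=\pm m$ which coincides with being ``at distance $m^2$ up a tooth of a boundary column'' — one must be careful not to double-count and to treat the vertices with $|x|=m$ correctly, since there the walk exits either up/down the tooth or left/right off the backbone. I would handle this by deriving the harmonic function $u(z)=\P_z[\text{exit at a fixed }z_0\in\partial B_m]$ directly: $u$ is the unique function on $B_m$ that is harmonic on the interior, equals $\mathbf{1}_{\{z=z_0\}}$ on $\partial B_m$; by the tree structure $u$ is affine along each tooth and satisfies a simple recurrence along the backbone, and I would verify by a direct substitution that the candidate $\nu_{m,o}(z_0)=1/|\partial B_m|$ is consistent with $\nu_{m,o}(z_0)=\sum_{y\sim o}\frac14 u(y)$ and with all the harmonicity constraints. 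Once the affine-along-teeth reduction is in place, the remaining computation is a finite linear-algebra identity that the choice $h(x)=x^2$ makes come out uniform; checking that identity — ideally by recognizing the telescoping sum $\sum_{x=1}^{k} (2x-1) = k^2$ — is the crux, and I expect it to be short but is where all the ``magic'' of the exponent $2$ resides.
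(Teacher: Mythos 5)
Your route is genuinely different from the paper's and is, in principle, viable. The paper never touches the random walk directly: it runs a rotor-router process in which every rotor makes full turns, uses the Holroyd--Propp weight invariance to show that the number $e_m(x)$ of particles stopping at each boundary vertex is proportional to the harmonic measure, derives the backbone recursion \eqref{eq:hm_rec} from harmonicity of the normalized odometer, and checks by induction that for $h(x)=x^2$ the solution is constant -- the whole content being the identity $(x+1)^2+(x-1)^2-2x^2-2=0$, i.e.\ exactly the second-difference fact you isolate at the end. Your electrical-network argument reaches the same identity with no rotor-router input: with unit conductances, inject unit current at $o$ and ground $\partial B_m$; the voltage is $v(x,0)=C(m-|x|)^2$ on the backbone and linear along each tooth, every boundary vertex then receives current exactly $C$, and since $|\partial B_m|=4m$ one gets $\nu_{m,o}\equiv 1/(4m)$. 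This is more elementary and self-contained; what the paper's heavier setup buys is reuse, since the identical recursion \eqref{eq:hm_rec} is needed again for Theorem \ref{lem:harmonic_measure}, where no closed form exists.

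Two concrete errors in your setup must be repaired before the computation closes. First, you have the geometry of $B_m$ inverted: by \eqref{eq:B_m} the tooth over the backbone vertex $(x,0)$ has half-height $h(m-|x|)=(m-|x|)^2$, so the tallest tooth (height $m^2$) passes through the starting point $o=(0,0)$ and the teeth shrink to height $0$ at $(\pm m,0)$; you place the tips at $(x,\pm x^2)$ and assert there is no tooth at the origin, which is a different set. This is not cosmetic: for your shape the uniformity condition would force the backbone voltage to increase away from the injection point, which is impossible, so the ``finite linear-algebra identity'' you intend to verify would fail as written. Second, the claim that a walk entering a tooth at $(x,0)$ reaches a tip with probability one is false; from $(x,1)$ it returns to the backbone before reaching the tip with probability $1-1/H$, where $H$ is the tooth height. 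The correct statement -- and the one your resistor picture already encodes -- is that each tooth arm is a series resistance $H$, equivalently that each visit to $(x,0)$ ends in absorption at a given tip of that tooth with probability $\tfrac{1}{4H}$. With these corrections your plan yields a complete and arguably simpler proof than the one in the paper.
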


For the router-router cluster in Theorem \ref{thm:rotor_shape}, we are 
able to give asymptotics of the harmonic measure. We prove the following.

\begin{thm}
\label{lem:harmonic_measure}
Let $B_m\subset \C$ be as in \eqref{eq:B_m} with $h(x) = \left\lfloor (x+1)^2/3 \right\rfloor$,  
and $z=(z_x,z_y)\in \partial B_m$. There exists a function $e:\N_0\to\N$ with $\lim_{x\to\infty}\frac{e(x)}{x} = c$,
and $0 < c < 1/2$, such that for all $m\geq 0$ the harmonic measure $\nu_{m,o}(z)$ is proportional to $e(m - \abs{z_x})$.
\end{thm}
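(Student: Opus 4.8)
The plan is to reduce the problem on $\C$ to a one–dimensional one by integrating out the teeth, exactly as in the proof of Theorem~\ref{thm:rotor_shape}. Since $\C$ is a tree, watching a simple random walk on $B_m$ only at the backbone vertices $\{-m,\dots,m\}\times\{0\}$ yields again a killed Markov chain, and a gambler's–ruin computation shows that a walk entering a tooth of height $H=h(m-|x|)$ reaches its tip — hence leaves $B_m$ — with probability $\tfrac1{H+1}$ before returning to the backbone. So the watched chain is a nearest–neighbour chain on $\{-m,\dots,m\}$ in which the site $x$ carries killing conductance $\tfrac{2}{h(m-|x|)+1}$ when $|x|<m$, whereas the endpoints $\pm m$, having three neighbours outside $B_m$, carry the larger killing conductance $3$. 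Combining this with the standard identity $\nu_{m,o}(z)=\tfrac{k_z}{\deg z}\,G_{B_m}(o,z)$ (where $G_{B_m}$ is the Green's function of the killed walk and $k_z$ the number of neighbours of $z$ outside $B_m$) shows that $\nu_{m,o}(z)$ equals, up to a factor depending only on $m$, the quantity $g_m(z_x)\big/\bigl(h(m-|z_x|)+1\bigr)$ when $z$ is a tooth tip and $3\,g_m(\pm m)$ when $z=(\pm m,0)$, where $g_m$ is the symmetric Green's function of the watched chain started at $0$.

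The key structural point is that $g_m$ has an $m$–independent profile read from an endpoint. Away from the source $0$, $g_m$ is discrete–harmonic for the killed chain; writing $x=m-k$ and $G^{(m)}_k:=g_m(m-k)$, the boundary relation at the endpoint gives $G^{(m)}_1=4\,G^{(m)}_0$, while the interior equations give $G^{(m)}_{k+1}=\bigl(2+\tfrac{2}{h(k)+1}\bigr)G^{(m)}_k-G^{(m)}_{k-1}$ for $1\le k\le m-1$, with coefficients independent of $m$. Hence $G^{(m)}_k=g_m(m)\,\widehat G_k$ for the universal sequence $\widehat G_0=1$, $\widehat G_1=4$, $\widehat G_{k+1}=\bigl(2+\tfrac{2}{h(k)+1}\bigr)\widehat G_k-\widehat G_{k-1}$. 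This gives the asserted form: $\nu_{m,o}(z)$ is proportional, with an $m$–dependent constant, to $e(m-|z_x|)$, where $e(0)$ is the (exceptional) endpoint value and $e(k)\propto\widehat G_k\big/(h(k)+1)$ for $k\ge1$. Running the same one–dimensional computation with a rotor–router process instead of the random walk — the route taken in the proof below — produces $e$ directly as an $\N$–valued function, since the relevant quantities become integer odometer and exit counts.

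It remains to analyse the recursion. For $h(k)=\lfloor(k+1)^2/3\rfloor$ it reads $\widehat G_{k+1}-2\widehat G_k+\widehat G_{k-1}=\bigl(\tfrac{6}{(k+1)^2}+O(k^{-4})\bigr)\widehat G_k$, a discrete Euler–type equation; its indicial equation $\alpha(\alpha-1)=6$ has roots $3$ and $-2$, so $\widehat G_k$ should grow like $A\,k^3$. I would first check directly from the recursion that $\widehat G_k$ is positive and increasing, then sandwich it between explicit super– and sub–solutions of the form $\beta_\pm k^3$ (corrected by the $k^{-2}$ mode and by the perturbation of the coefficient) by induction, obtaining $\widehat G_k=\Theta(k^3)$ and, with a slightly finer comparison, $\widehat G_k/k^3\to A$. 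Since $h(k)+1\sim (k+1)^2/3$, this gives $e(k)\sim 3A\,k$, so $e(k)/k\to c$ exists, and tracking the constant through the normalisation produced by the rotor–router process gives $0<c<1/2$. The main obstacle is precisely this last quantitative step — controlling $\widehat G_k$ sharply enough to place $c$ strictly below $1/2$ — and it is here that the rotor–router formulation is most useful: there the exit counts are integers and the monotone (abelian) structure of rotor walks turns the required estimates into finitely many explicit checks, avoiding a delicate analysis of the real recursion.
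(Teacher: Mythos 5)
Your reduction to a one--dimensional recursion is a genuinely different (and in some ways more standard) route than the paper's: you integrate out the teeth via the Green function of the killed random walk, whereas the paper runs a rotor--router process, uses the Holroyd--Propp rotor-weight invariance to identify $\nu_{m,o}(z)$ with the exit count $e_m(z)$, and reads the recursion off the harmonicity of the normalized odometer. Both routes lead to essentially the paper's recursion $e(k+1)h(k+1)+e(k-1)h(k-1)=2e(k)\bigl(h(k)+1\bigr)$, and your observation that the coefficients become $m$-independent when read from an endpoint is exactly how the paper obtains an $m$-independent profile $e$. Two bookkeeping slips in your reduction need fixing, though: a walk entering a tooth whose tip sits at height $H$ is absorbed at the tip with probability $1/H$, not $1/(H+1)$ (gambler's ruin from height $1$ on $\{0,\dots,H\}$ with the tip absorbing), and the endpoints $(\pm m,0)$ lie in $\partial B_m$ and are themselves absorbing, so they do not carry a ``killing conductance $3$''; with the paper's definition of $\nu_{m,o}$ as the \emph{hitting} distribution of the inner boundary, your identity $\nu_{m,o}(z)=\tfrac{k_z}{\deg z}G_{B_m}(o,z)$ is the last-exit identity for a different measure. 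These slips replace $h$ by $h+1$ in the recursion and shift the index by one, which changes the numerical value of $c$ --- and that matters, because the bound $c<1/2$ is ultimately verified from the first terms of the exact recursion.

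The substantive gap is in the asymptotic analysis. Your super/subsolution comparison against $\beta_{\pm}k^3$ (after the discrete Euler heuristic $\alpha(\alpha-1)=6$ with roots $3$ and $-2$) is workable and would give $\widehat G_k=\Theta(k^3)$, hence $0<\liminf e(k)/k\le\limsup e(k)/k<\infty$. Note that the lower bound already delivers the paper's hardest conclusion, $c>0$, far more cheaply than Lemma \ref{lem:least_lin}, which converts the recursion into a seventh-order ODE for a generating function and invokes singularity analysis. But the theorem asserts that the limit $c=\lim e(x)/x$ \emph{exists}, and ``a slightly finer comparison'' is not an argument: two-sided $\Theta(k^3)$ bounds do not force convergence of $\widehat G_k/k^3$. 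This is precisely where the paper invests its real work: Lemma \ref{lem:most_lin} proves that $e(x)/x$ is monotonically decreasing by an induction split over the residue of $x$ modulo $3$ (the floor in $h$ forces three separate cases), and that monotonicity is also what turns finitely many computed values into the bound $c<1/2$. To close your argument you must either reproduce such a monotonicity statement, or supply a Poincar\'e--Perron/Casoratian argument identifying your solution against a basis behaving like $k^3(1+o(1))$ and $k^{-2}(1+o(1))$; as written, the existence of the limit is unproved.
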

This gives the first example where the rotor-router cluster is not a set with
uniform harmonic measure, and grows faster in the vertical direction than in the
horizontal direction.

We want to emphasize that it is not easy to apply this method in many cases, 
since it requires exact knowledge of the odometer function of the rotor-router walk, 
and at least some insight in the structure of the \emph{Abelian sandpile group} of the
set under consideration. The connection of the \emph{Abelian sandpile group}
to the rotor-router model has been established in the physics
literature; see \cite{PhysRevE.58.5449, PhysRevLett.77.5079}. One can
define a group based on the action of a particle which performs a
rotor-router walk on the rotor configuration. This \emph{rotor-router group}
is abelian and isomorphic to the Abelian sandpile group. This
isomorphism has been proven formally in \cite{landau_levine_2009}.
For a self-contained introduction see the overview paper of \textsc{Holroyd, Levine, et.al.} \cite{chip_rotor_2008}.

\section{Preliminaries}
\label{sec:rotor-router}
Let $(G,E(G))$ be an infinite, undirected and connected graph,
with vertex set $G$, equipped with a symmetric \emph{adjacency relation}
$\sim$, which defines the set of edges $E(G)$ (as a subset of $G\times G$). We 
write $(x,y)$ for the edge between the pair of neighbours $x,y$. In order to simplify 
the notation, instead of writing $(G,E(G))$ for a graph, we shall write only
$G$, and it will be clear from the context whether we are considering edges or vertices.
We denote by $d(x)$ the degree of the vertex $x$, that is,
the number of neighbours of $x$ in $G$. Fix a nonempty subset $S\subset G$ of vertices 
called \emph{sinks}, and let $G'=G\setminus S$.

The \emph{odometer function} $u(x)$ of the 
rotor-router aggregation is defined as the number of particles sent out by 
the vertex $x$ during the creation of the rotor-router cluster $R_n$ of $n$ particles.

A \emph{rotor configuration} on $G$ is a function $\rho: G'\to G$, such that $\rho(x)$ is  
a neighbour of $x$, for all $x\in G'$, that is $(x,\rho(x))\in E(G)$. Hence, $\rho$
assigns to every vertex one of its neighbours.
A rotor configuration $\rho$ is called \emph{acyclic}, if the subgraph of $G$
spanned by the rotors contains no directed cycles. A \emph{particle configuration}
on $G$ is a function $\sigma: G\to \mathbb{Z}$, with finite support.
If $\sigma(x)=m>0$, we say that there are $m$ particles at vertex $x$.
The \emph{rotor sequence} at vertex $x$ will be denoted by
$c(x) = \big(x_0,x_1,\ldots,x_{d(x)-1}\big)$ where all $x_i\sim x$ and
$x_i\not=x_j$ for $i\not=j$, with $i,j=0,1,\ldots,d(x)-1$. 
If $y=x_i\in c(x)$, for some $i\in \{0,1,\ldots, d(x)-1\}$, we denote by
$y^+$ the vertex $x_{(i+1)\bmod\, d(x)}$.

\begin{defn}[Toppling operator]
\label{def:rotor_toppling_operator}
Fix a vertex $x\in G'$. For a rotor configuration $\rho$ and a particle configuration $\sigma$ on $G$, we define
the \emph{toppling operator} $F_x$, which sends one particle out of vertex $x$, by
\begin{equation*}
F_x(\rho,\sigma) = (\rho', \sigma'),
\end{equation*}
where the new rotor configuration $\rho'$ is given by
\begin{equation*}
\rho'(y) =
\begin{cases}
\rho(y)^+ \quad &\text{if } y=x, \\
\rho(y)         &\text{otherwise},
\end{cases}
\end{equation*}
and the new particle configuration $\sigma'$ is given by
\begin{equation*}
\sigma'(y) =
\begin{cases}
\sigma(y)-1 \quad &\text{if } y=x, \\
\sigma(y)+1       &\text{if } y=\rho'(x), \\
\sigma(y)         &\text{otherwise}.
\end{cases}
\end{equation*}
\end{defn}
So $F_x$ first changes the rotor configuration by rotating the arrow at $x$
to its next position in the cyclic ordering $c(x)$, and then it sends a particle to 
the vertex the rotor at $x$ is now pointing at. The operation $F_x$ of toppling at some vertex $x$
can be successful even if there is no particle at $x$. If this is the case,
then a ``virtual particle'' is sent away from $x$ and a ``hole'' is left there.
If there is already a hole at $x$, the operator $F_x$ will increase its depth
by one. In the normal rotor-router aggregation no holes are ever
created during the whole process. A sequence of topplings $\{x_k\}_{k\geq 1}$
is called \emph{legal}, if no holes are created when the vertices $x_k$ are
toppled in sequence.

Note that the toppling operators commute, i.e., $F_x F_y = F_y F_x$ for
all $x,y\in G'$. This is the usual \emph{abelian property} for rotor-router walks.
While the final configuration is always the same, rearranging the
order of the topplings can turn a legal toppling sequence into one that
creates holes and virtual particles.

Given a function $u: G' \to \N$, let
\begin{equation*}
F^u = \prod_{x\in G'} F_x^{u(x)},
\end{equation*}
where product means composition of the operators. Because of the abelian
property, $F^u$ is well defined.

In order to prove a shape result for rotor-router aggregation
on $\C$, for a specific initial configuration, we will apply a
stronger version of the usual Abelian property of rotor-router walks,
which has been recently introduced by \textsc{Kager and Levine}
\cite{kager_levine_rotor_aggregation}. We state it here for completeness.

\begin{thm}[Strong Abelian Property]
\label{thm:strong_abelian_property}
Let $\rho_0$ be a rotor configuration and $\sigma_0$ a particle configuration
on $G$. Given two functions $u_1, u_2: G' \to \N$, write
\begin{equation*}
F^{u_i}(\rho_0, \sigma_0) = (\rho_i, \sigma_i), \qquad i = 1,2.
\end{equation*}
If $\sigma_1 = \sigma_2$ on $G'$, and both $\rho_1$ and $\rho_2$ are acyclic,
then $u_1 = u_2$.
\end{thm}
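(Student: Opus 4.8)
The plan is to argue by contradiction, after first reducing to the case in which the two odometers have disjoint supports. First I would put $u=\min(u_1,u_2)$ (pointwise) and $(\rho,\sigma)=F^{u}(\rho_0,\sigma_0)$. By the ordinary abelian property $F^{u_i}=F^{v_i}\circ F^{u}$ with $v_i=u_i-u\ge 0$, so that $F^{v_i}(\rho,\sigma)=(\rho_i,\sigma_i)$, while now $\min(v_1,v_2)\equiv 0$, the configurations $\rho_1,\rho_2$ are still acyclic, and $\sigma_1=\sigma_2$ on $G'$. It then suffices to prove $v_1\equiv v_2\equiv 0$. Assume not; then one of them, say $v_1$, is not identically $0$, so its support $A:=\{x\in G':v_1(x)>0\}$ is a finite, nonempty set (contained in the support of $u_1$), and $v_2$ vanishes on $A$ because $\min(v_1,v_2)\equiv 0$. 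Write $B$ for the support of $v_2$, which is disjoint from $A$. The aim from here on is to produce a directed cycle in $\rho_1$, contradicting its acyclicity.

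Next I would record the particle balance. Toppling a vertex $x$ a total of $v_i(x)$ times, starting from the rotor configuration $\rho$, sends some number $N_i(x,y)\ge 0$ of particles from $x$ to each neighbour $y$; moreover $\sum_{y\sim x}N_i(x,y)=v_i(x)$ exactly, and $N_i(x,\cdot)\equiv 0$ whenever $v_i(x)=0$. Counting particles gained and lost at a non-sink vertex $y$ gives
\begin{equation*}
\sigma_i(y)=\sigma(y)-v_i(y)+\sum_{x\sim y}N_i(x,y),\qquad y\in G'.
\end{equation*}
Subtracting the $i=1$ and $i=2$ identities, using $\sigma_1=\sigma_2$ on $G'$ together with the fact that $N_1$ is supported on $A$ and $N_2$ on $B$, I obtain for every $y\in A$ (where $v_2(y)=0$)
\begin{equation*}
v_1(y)=\sum_{x\sim y,\, x\in A}N_1(x,y)-\sum_{x\sim y,\, x\in B}N_2(x,y).
\end{equation*}

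Summing this identity over $y\in A$ is the crucial step. The left-hand side is $\sum_{x\in A}v_1(x)$; interchanging the order of summation turns the first term on the right into $\sum_{x\in A}\sum_{y\sim x,\, y\in A}N_1(x,y)$, which is at most $\sum_{x\in A}v_1(x)$ because $\sum_{y\sim x}N_1(x,y)=v_1(x)$; and the second term is non-negative. Hence both estimates are in fact equalities, which forces: (i) for every $x\in A$, all $v_1(x)$ particles emitted by $x$ land in $A$; and (ii) for every $y\in A$, no particle reaches $y$ from $B$. Property (i) is what I need: since $v_1(x)\ge 1$ for $x\in A$, after those topplings the rotor at $x$ points at the last neighbour it served, which by (i) lies in $A$; that is, $\rho_1(x)\in A$ for all $x\in A$. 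Thus $\rho_1$ restricts to a self-map of the finite nonempty set $A$, and iterating it from any vertex produces a directed cycle of rotor edges inside $A$, contradicting the acyclicity of $\rho_1$. Therefore $v_1\equiv v_2\equiv 0$, i.e.\ $u_1=u_2$.

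I expect the main obstacle to be not the summation, which is short, but getting the bookkeeping exactly right: one must use that the emissions from $x$ are governed precisely by $v_i(x)$, that the out-flow from $x$ totals $v_i(x)$, and---most importantly---that the rotor's terminal position at $x$ is among the neighbours to which it sent a particle. Granting those facts, the natural choice $A=\{u_1>u_2\}$ together with the ``no escape'' property (i) is exactly what lets acyclicity close the argument.
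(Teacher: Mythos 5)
The paper does not actually prove Theorem \ref{thm:strong_abelian_property}; it is quoted without proof from Kager and Levine \cite{kager_levine_rotor_aggregation}. Your argument is correct and is essentially the standard proof of this statement: subtract the pointwise minimum to reduce to odometers $v_1,v_2$ with disjoint supports, use the particle balance $\sigma_i(y)=\sigma(y)-v_i(y)+\sum_{x\sim y}N_i(x,y)$ together with $\sigma_1=\sigma_2$ on $G'$, and sum over $A=\{v_1>0\}$ to force every particle emitted from $A$ to land in $A$; since the terminal rotor at each $x\in A$ points at the last recipient, $\rho_1$ maps the finite nonempty set $A$ into itself and hence contains a directed cycle, contradicting acyclicity. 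The bookkeeping you flag as the main risk is handled correctly (in particular, tightness of the summed inequality does give $\sum_{y\sim x,\,y\in A}N_1(x,y)=v_1(x)$ for \emph{each} $x\in A$, which is what the cycle extraction needs), so I see no gap.
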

Note that the equality $u_1=u_2$ implies also that $\rho_1=\rho_2$, and moreover 
$\sigma_1=\sigma_2$ on all of $G$. This result allows us to drop the hypothesis of
legality: each final particle configuration can only be achieved by
an unique amount of topplings for each vertex, even if we allow virtual
particles to be formed during the process.

\textsc{Friedrich and Levine} \cite{friedrich_levine} used the \emph{Strong Abelian Property}
 to give an exact characterization of the odometer function of rotor-router
aggregation. Recall that the odometer function $u(x)$ at some vertex $x$
represents the number of particles sent out by $x$ during the creation of the rotor-router
cluster.

\begin{thm}[Friedrich, Levine]
\label{thm:rotor_odometer_properties}
Let $G$ be a finite or infinite directed graph, $\rho_0$ an initial rotor
configuration on $G$, and $\sigma_0 = n\cdot\delta_o$. 
Fix $u_\star:G\to\N$, and let
\begin{equation*}
 A_\star = \big\{x\in G: u_\star(x) > 0\big\}.
\end{equation*}
Further define $\rho_\star$ and $\sigma_\star$ by
\begin{equation*}
F^{u_\star}(\rho_0,\sigma_0) = (\rho_\star, \sigma_\star).
\end{equation*}
Suppose the following properties hold
\begin{enumerate}[(a)]
\item $\sigma_\star \leq 1$,
\item $A_\star$ is finite,
\item $\sigma_\star(x) = 1$ for all $x\in A_\star$, and
\item  $\rho_\star$ is acyclic on $A_\star$.
\end{enumerate}
Then $u_\star$ is the rotor-router odometer function of $n$ particles.
\end{thm}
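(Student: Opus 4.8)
The plan is to deduce the statement from the Strong Abelian Property, Theorem~\ref{thm:strong_abelian_property}. Write $u$ for the genuine rotor-router odometer of $n$ particles started at $o$, set $(\rho,\sigma)=F^u(\rho_0,\sigma_0)$ and $A=\{x:u(x)>0\}$. In rotor-router aggregation a vertex is fired only while it carries at least two particles, so a firing never empties an occupied site; hence the terminal configuration is $\sigma=\indicator{R_n}$, and $u$ itself satisfies (a)--(d): (a) and (c) are immediate, (b) holds because only finitely many firings occur, and (d) is the standard fact that the terminal rotor configuration of a legal aggregation has no directed cycle inside the cluster. Therefore the theorem reduces to the implication that \emph{any} $u_\star$ satisfying (a)--(d) equals $u$; and by Theorem~\ref{thm:strong_abelian_property} applied to the pair $u,u_\star$ (whose acyclicity hypotheses are furnished by (d) and by the acyclicity of $\rho$), it suffices to prove that the terminal particle configurations agree, i.e. $\sigma_\star=\sigma$ on $G'$.

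\emph{Step 1: $u\le u_\star$ pointwise.} Fix a legal firing sequence $x_1,\dots,x_N$ realizing the aggregation and let $u^{(i)}$ be the partial odometer after $i$ firings. I claim $u^{(i)}\le u_\star$ for all $i$, whence $u=u^{(N)}\le u_\star$; I would prove this by induction. Assuming $u^{(i-1)}\le u_\star$, suppose firing $x_i$ breaks the bound, i.e. $v(x_i)=u_\star(x_i)$ where $v:=u^{(i-1)}$, and let $(\rho_v,\sigma_v)=F^v(\rho_0,\sigma_0)$ be the configuration just before step~$i$. Legality of the sequence forces $\sigma_v(x_i)\ge 2$. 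Writing $u_\star=v+p$ with $p\ge 0$ and $p(x_i)=0$, the abelian property gives $F^{p}(\rho_v,\sigma_v)=(\rho_\star,\sigma_\star)$; in this composition $x_i$ is never fired (since $p(x_i)=0$) and can only gain particles, so $\sigma_\star(x_i)\ge\sigma_v(x_i)\ge 2$, contradicting~(a). Hence $u^{(i)}\le u_\star$ and the induction goes through.

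\emph{Step 2: the terminal configurations coincide, and conclusion.} First, $\sigma_\star\ge 0$: on $A_\star$ it equals $1$ by~(c), while off $A_\star$ a vertex never fires in the $u_\star$-process and so only gains particles from the nonnegative initial configuration $\sigma_0$. Together with~(a) this forces $\sigma_\star=\indicator{T}$ with $T:=\mathrm{supp}\,\sigma_\star$, and conservation of mass (finitely many firings, by~(b)) gives $\abs{T}=n=\abs{R_n}$. Next I would show $R_n\subseteq T$: by Step~1, $A\subseteq A_\star$, and $A_\star\subseteq T$ by~(c), so $A\subseteq T$; and if $a\in R_n\setminus A$ then $u(a)=0$ while $\sigma(a)=1$, and writing $u_\star=u+w$ with $w\ge 0$ we have $F^{w}(\rho,\sigma)=(\rho_\star,\sigma_\star)$, a process in which $a$ is fired $w(a)=u_\star(a)$ times --- so either $u_\star(a)>0$ and $a\in A_\star\subseteq T$, or $u_\star(a)=0$, $a$ only gains particles, $\sigma_\star(a)\ge\sigma(a)=1$, and again $a\in T$. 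Since $\abs{R_n}=\abs{T}$, this yields $T=R_n$, i.e. $\sigma_\star=\indicator{R_n}=\sigma$, and the Strong Abelian Property then gives $u_\star=u$.

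The crux is that the Strong Abelian Property by itself is not enough: it presupposes that the two terminal particle configurations agree, and all the real work lies in reaching that hypothesis. Step~1 secures one half of it, and the subtle point there is that one cannot simply ``run $u_\star$ as a legal sequence'', since once aggregation stabilizes any further firing is illegal --- which is exactly why the argument must be phrased through $F^{u_\star}=F^{p}\circ F^{v}$ and the coordinatewise monotonicity of $F^p$. Step~2 is where hypotheses (c) and (d) earn their keep: (c) pins the occupied set down to contain the firing set, which is what forces $\mathrm{supp}\,\sigma_\star=R_n$ rather than some other $n$-element set, after which (d) feeds into Theorem~\ref{thm:strong_abelian_property}. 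The only remaining delicate point is bookkeeping: making the acyclicity hypotheses of Theorem~\ref{thm:strong_abelian_property} precise for $\rho$ and $\rho_\star$ outside the respective clusters, which is routine once the ambient graph is set up with its exterior as a sink.
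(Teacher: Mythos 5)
First, a point of reference: the paper states Theorem~\ref{thm:rotor_odometer_properties} as a quoted result of Friedrich and Levine \cite{friedrich_levine} and gives no proof, so your argument can only be judged on its own terms. Your Steps~1 and~2 are correct and follow the standard route: the least-action argument via $F^{u_\star}=F^{p}\circ F^{v}$ does give $u\le u_\star$ (the key observation that the aggregation firing sequence only topples sites carrying at least two particles is right), and the mass/support count does force $\sigma_\star=\indicator{R_n}=\sigma$.

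The gap is in the last step, and it is not the ``routine bookkeeping'' you claim. Theorem~\ref{thm:strong_abelian_property} needs \emph{both} terminal rotor configurations to be acyclic on a vertex set containing $\mathrm{supp}(u)\cup\mathrm{supp}(u_\star)=A_\star$. Hypothesis (d) supplies this for $\rho_\star$, but for the true terminal configuration $\rho$ your ``standard fact'' only yields acyclicity on the fired set $A$ (the last-firing-time argument); at the vertices of $A_\star\setminus A$, and more generally at never-fired vertices of $R_n$, the rotor is still $\rho_0$, about which the theorem assumes nothing --- and even if $\rho_0$ were acyclic, a directed cycle could in principle alternate between freshly set rotors on $A$ and untouched $\rho_0$-rotors at never-fired cluster sites, so acyclicity of $\rho$ on $A_\star$ does not follow. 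The repair is to drop Theorem~\ref{thm:strong_abelian_property} and finish with a cycle-forcing argument that uses only (d): put $w=u_\star-u\ge 0$, so that by the abelian property $F^{w}(\rho,\sigma)=(\rho_\star,\sigma_\star)=(\rho_\star,\sigma)$. Every vertex outside $\mathrm{supp}(w)$ emits no particle under $F^{w}$ and its particle count is unchanged, hence it also receives none; therefore every one of the finitely many particles emitted by $\mathrm{supp}(w)$ lands back in $\mathrm{supp}(w)$. In particular the \emph{last} particle emitted by each $x\in\mathrm{supp}(w)$ travels to $\rho_\star(x)$, so $\rho_\star$ maps the finite set $\mathrm{supp}(w)$ into itself and must contain a directed cycle inside $\mathrm{supp}(w)\subseteq A_\star$, contradicting (d) unless $w\equiv 0$, i.e.\ $u_\star=u$.
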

Using Theorem \ref{thm:rotor_odometer_properties}, for the proof of 
Theorem \ref{thm:rotor_shape} it is enough to give an explicit formula 
for the corresponding odometer function,
and to check if it satisfies all the required properties.

\section{Rotor-Router Aggregation}
\label{sec:initial_rotor_cfg}
Consider now the rotor-router aggregation on the comb $\C$, and the initial rotor
configuration as in Figure \ref{initial_rotor_configuration}. Through
this section, $B_m$ will be the set defined in \eqref{eq:B_m}, with
$h(x)$ given by
\begin{equation*}
 h(x) = \left\lfloor \frac{(x+1)^2}{3} \right\rfloor.
\end{equation*}

\begin{defn}
Let $(\rho, \sigma)$ be the final configuration of the rotor-router
aggregation process of $\abs{B_m}$ particles described in Theorem \ref{thm:rotor_shape}.
The configuration $(\rho, \sigma)$ is then called the $m$-th \emph{fully symmetric configuration}.
\end{defn}

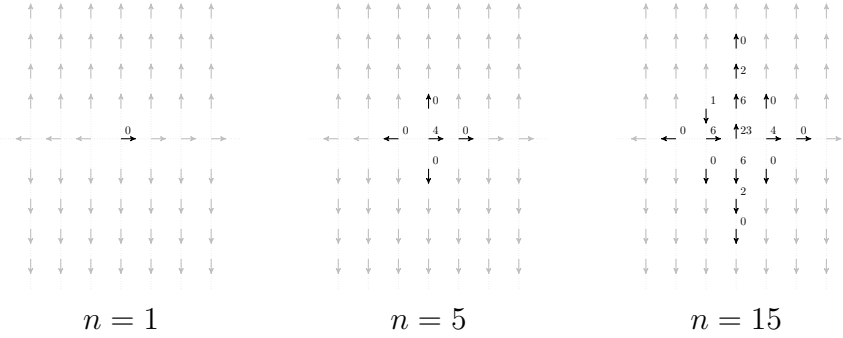
\begin{figure}
\centering
\begin{tikzpicture}
\draw[white] (-0.5, -0.5) rectangle (2.800000, 3.600000);\node (particles) at (1.200000,-0.800000) {$n=1$};
\pgflowlevelobj{\pgflowlevel{\pgftransformscale{0.400000}}}
{
\draw[black!15, dotted] (0, -1) -- (0, 9);
\draw[black!15, dotted] (1, -1) -- (1, 9);
\draw[black!15, dotted] (2, -1) -- (2, 9);
\draw[black!15, dotted] (3, -1) -- (3, 9);
\draw[black!15, dotted] (4, -1) -- (4, 9);
\draw[black!15, dotted] (5, -1) -- (5, 9);
\draw[black!15, dotted] (6, -1) -- (6, 9);
\draw[black!15, dotted] (-1, 4) -- (7, 4);
\draw[-stealth', line width=1pt, black!25] (0,0) -- (0.000000, -0.500000);
\draw[-stealth', line width=1pt, black!25] (0,1) -- (0.000000, 0.500000);
\draw[-stealth', line width=1pt, black!25] (0,2) -- (0.000000, 1.500000);
\draw[-stealth', line width=1pt, black!25] (0,3) -- (0.000000, 2.500000);
\draw[-stealth', line width=1pt, black!25] (0,4) -- (-0.500000, 4.000000);
\draw[-stealth', line width=1pt, black!25] (0,5) -- (0.000000, 5.500000);
\draw[-stealth', line width=1pt, black!25] (0,6) -- (0.000000, 6.500000);
\draw[-stealth', line width=1pt, black!25] (0,7) -- (0.000000, 7.500000);
\draw[-stealth', line width=1pt, black!25] (0,8) -- (0.000000, 8.500000);
\draw[-stealth', line width=1pt, black!25] (1,0) -- (1.000000, -0.500000);
\draw[-stealth', line width=1pt, black!25] (1,1) -- (1.000000, 0.500000);
\draw[-stealth', line width=1pt, black!25] (1,2) -- (1.000000, 1.500000);
\draw[-stealth', line width=1pt, black!25] (1,3) -- (1.000000, 2.500000);
\draw[-stealth', line width=1pt, black!25] (1,4) -- (0.500000, 4.000000);
\draw[-stealth', line width=1pt, black!25] (1,5) -- (1.000000, 5.500000);
\draw[-stealth', line width=1pt, black!25] (1,6) -- (1.000000, 6.500000);
\draw[-stealth', line width=1pt, black!25] (1,7) -- (1.000000, 7.500000);
\draw[-stealth', line width=1pt, black!25] (1,8) -- (1.000000, 8.500000);
\draw[-stealth', line width=1pt, black!25] (2,0) -- (2.000000, -0.500000);
\draw[-stealth', line width=1pt, black!25] (2,1) -- (2.000000, 0.500000);
\draw[-stealth', line width=1pt, black!25] (2,2) -- (2.000000, 1.500000);
\draw[-stealth', line width=1pt, black!25] (2,3) -- (2.000000, 2.500000);
\draw[-stealth', line width=1pt, black!25] (2,4) -- (1.500000, 4.000000);
\draw[-stealth', line width=1pt, black!25] (2,5) -- (2.000000, 5.500000);
\draw[-stealth', line width=1pt, black!25] (2,6) -- (2.000000, 6.500000);
\draw[-stealth', line width=1pt, black!25] (2,7) -- (2.000000, 7.500000);
\draw[-stealth', line width=1pt, black!25] (2,8) -- (2.000000, 8.500000);
\draw[-stealth', line width=1pt, black!25] (3,0) -- (3.000000, -0.500000);
\draw[-stealth', line width=1pt, black!25] (3,1) -- (3.000000, 0.500000);
\draw[-stealth', line width=1pt, black!25] (3,2) -- (3.000000, 1.500000);
\draw[-stealth', line width=1pt, black!25] (3,3) -- (3.000000, 2.500000);
\coordinate[label=45:\small{$0$}] (3_4) at (3,4);
\draw[-stealth', line width=1pt, black] (3,4) -- (3.500000, 4.000000);
\draw[-stealth', line width=1pt, black!25] (3,5) -- (3.000000, 5.500000);
\draw[-stealth', line width=1pt, black!25] (3,6) -- (3.000000, 6.500000);
\draw[-stealth', line width=1pt, black!25] (3,7) -- (3.000000, 7.500000);
\draw[-stealth', line width=1pt, black!25] (3,8) -- (3.000000, 8.500000);
\draw[-stealth', line width=1pt, black!25] (4,0) -- (4.000000, -0.500000);
\draw[-stealth', line width=1pt, black!25] (4,1) -- (4.000000, 0.500000);
\draw[-stealth', line width=1pt, black!25] (4,2) -- (4.000000, 1.500000);
\draw[-stealth', line width=1pt, black!25] (4,3) -- (4.000000, 2.500000);
\draw[-stealth', line width=1pt, black!25] (4,4) -- (4.500000, 4.000000);
\draw[-stealth', line width=1pt, black!25] (4,5) -- (4.000000, 5.500000);
\draw[-stealth', line width=1pt, black!25] (4,6) -- (4.000000, 6.500000);
\draw[-stealth', line width=1pt, black!25] (4,7) -- (4.000000, 7.500000);
\draw[-stealth', line width=1pt, black!25] (4,8) -- (4.000000, 8.500000);
\draw[-stealth', line width=1pt, black!25] (5,0) -- (5.000000, -0.500000);
\draw[-stealth', line width=1pt, black!25] (5,1) -- (5.000000, 0.500000);
\draw[-stealth', line width=1pt, black!25] (5,2) -- (5.000000, 1.500000);
\draw[-stealth', line width=1pt, black!25] (5,3) -- (5.000000, 2.500000);
\draw[-stealth', line width=1pt, black!25] (5,4) -- (5.500000, 4.000000);
\draw[-stealth', line width=1pt, black!25] (5,5) -- (5.000000, 5.500000);
\draw[-stealth', line width=1pt, black!25] (5,6) -- (5.000000, 6.500000);
\draw[-stealth', line width=1pt, black!25] (5,7) -- (5.000000, 7.500000);
\draw[-stealth', line width=1pt, black!25] (5,8) -- (5.000000, 8.500000);
\draw[-stealth', line width=1pt, black!25] (6,0) -- (6.000000, -0.500000);
\draw[-stealth', line width=1pt, black!25] (6,1) -- (6.000000, 0.500000);
\draw[-stealth', line width=1pt, black!25] (6,2) -- (6.000000, 1.500000);
\draw[-stealth', line width=1pt, black!25] (6,3) -- (6.000000, 2.500000);
\draw[-stealth', line width=1pt, black!25] (6,4) -- (6.500000, 4.000000);
\draw[-stealth', line width=1pt, black!25] (6,5) -- (6.000000, 5.500000);
\draw[-stealth', line width=1pt, black!25] (6,6) -- (6.000000, 6.500000);
\draw[-stealth', line width=1pt, black!25] (6,7) -- (6.000000, 7.500000);
\draw[-stealth', line width=1pt, black!25] (6,8) -- (6.000000, 8.500000);
}
\end{tikzpicture}
\hspace{0.5cm}
\begin{tikzpicture}
\draw[white] (-0.5, -0.5) rectangle (2.800000, 3.600000);\node (particles) at (1.200000,-0.800000) {$n=5$};
\pgflowlevelobj{\pgflowlevel{\pgftransformscale{0.400000}}}
{
\draw[black!15, dotted] (0, -1) -- (0, 9);
\draw[black!15, dotted] (1, -1) -- (1, 9);
\draw[black!15, dotted] (2, -1) -- (2, 9);
\draw[black!15, dotted] (3, -1) -- (3, 9);
\draw[black!15, dotted] (4, -1) -- (4, 9);
\draw[black!15, dotted] (5, -1) -- (5, 9);
\draw[black!15, dotted] (6, -1) -- (6, 9);
\draw[black!15, dotted] (-1, 4) -- (7, 4);
\draw[-stealth', line width=1pt, black!25] (0,0) -- (0.000000, -0.500000);
\draw[-stealth', line width=1pt, black!25] (0,1) -- (0.000000, 0.500000);
\draw[-stealth', line width=1pt, black!25] (0,2) -- (0.000000, 1.500000);
\draw[-stealth', line width=1pt, black!25] (0,3) -- (0.000000, 2.500000);
\draw[-stealth', line width=1pt, black!25] (0,4) -- (-0.500000, 4.000000);
\draw[-stealth', line width=1pt, black!25] (0,5) -- (0.000000, 5.500000);
\draw[-stealth', line width=1pt, black!25] (0,6) -- (0.000000, 6.500000);
\draw[-stealth', line width=1pt, black!25] (0,7) -- (0.000000, 7.500000);
\draw[-stealth', line width=1pt, black!25] (0,8) -- (0.000000, 8.500000);
\draw[-stealth', line width=1pt, black!25] (1,0) -- (1.000000, -0.500000);
\draw[-stealth', line width=1pt, black!25] (1,1) -- (1.000000, 0.500000);
\draw[-stealth', line width=1pt, black!25] (1,2) -- (1.000000, 1.500000);
\draw[-stealth', line width=1pt, black!25] (1,3) -- (1.000000, 2.500000);
\draw[-stealth', line width=1pt, black!25] (1,4) -- (0.500000, 4.000000);
\draw[-stealth', line width=1pt, black!25] (1,5) -- (1.000000, 5.500000);
\draw[-stealth', line width=1pt, black!25] (1,6) -- (1.000000, 6.500000);
\draw[-stealth', line width=1pt, black!25] (1,7) -- (1.000000, 7.500000);
\draw[-stealth', line width=1pt, black!25] (1,8) -- (1.000000, 8.500000);
\draw[-stealth', line width=1pt, black!25] (2,0) -- (2.000000, -0.500000);
\draw[-stealth', line width=1pt, black!25] (2,1) -- (2.000000, 0.500000);
\draw[-stealth', line width=1pt, black!25] (2,2) -- (2.000000, 1.500000);
\draw[-stealth', line width=1pt, black!25] (2,3) -- (2.000000, 2.500000);
\coordinate[label=45:\small{$0$}] (2_4) at (2,4);
\draw[-stealth', line width=1pt, black] (2,4) -- (1.500000, 4.000000);
\draw[-stealth', line width=1pt, black!25] (2,5) -- (2.000000, 5.500000);
\draw[-stealth', line width=1pt, black!25] (2,6) -- (2.000000, 6.500000);
\draw[-stealth', line width=1pt, black!25] (2,7) -- (2.000000, 7.500000);
\draw[-stealth', line width=1pt, black!25] (2,8) -- (2.000000, 8.500000);
\draw[-stealth', line width=1pt, black!25] (3,0) -- (3.000000, -0.500000);
\draw[-stealth', line width=1pt, black!25] (3,1) -- (3.000000, 0.500000);
\draw[-stealth', line width=1pt, black!25] (3,2) -- (3.000000, 1.500000);
\coordinate[label=45:\small{$0$}] (3_3) at (3,3);
\draw[-stealth', line width=1pt, black] (3,3) -- (3.000000, 2.500000);
\coordinate[label=45:\small{$4$}] (3_4) at (3,4);
\draw[-stealth', line width=1pt, black] (3,4) -- (3.500000, 4.000000);
\coordinate[label=45:\small{$0$}] (3_5) at (3,5);
\draw[-stealth', line width=1pt, black] (3,5) -- (3.000000, 5.500000);
\draw[-stealth', line width=1pt, black!25] (3,6) -- (3.000000, 6.500000);
\draw[-stealth', line width=1pt, black!25] (3,7) -- (3.000000, 7.500000);
\draw[-stealth', line width=1pt, black!25] (3,8) -- (3.000000, 8.500000);
\draw[-stealth', line width=1pt, black!25] (4,0) -- (4.000000, -0.500000);
\draw[-stealth', line width=1pt, black!25] (4,1) -- (4.000000, 0.500000);
\draw[-stealth', line width=1pt, black!25] (4,2) -- (4.000000, 1.500000);
\draw[-stealth', line width=1pt, black!25] (4,3) -- (4.000000, 2.500000);
\coordinate[label=45:\small{$0$}] (4_4) at (4,4);
\draw[-stealth', line width=1pt, black] (4,4) -- (4.500000, 4.000000);
\draw[-stealth', line width=1pt, black!25] (4,5) -- (4.000000, 5.500000);
\draw[-stealth', line width=1pt, black!25] (4,6) -- (4.000000, 6.500000);
\draw[-stealth', line width=1pt, black!25] (4,7) -- (4.000000, 7.500000);
\draw[-stealth', line width=1pt, black!25] (4,8) -- (4.000000, 8.500000);
\draw[-stealth', line width=1pt, black!25] (5,0) -- (5.000000, -0.500000);
\draw[-stealth', line width=1pt, black!25] (5,1) -- (5.000000, 0.500000);
\draw[-stealth', line width=1pt, black!25] (5,2) -- (5.000000, 1.500000);
\draw[-stealth', line width=1pt, black!25] (5,3) -- (5.000000, 2.500000);
\draw[-stealth', line width=1pt, black!25] (5,4) -- (5.500000, 4.000000);
\draw[-stealth', line width=1pt, black!25] (5,5) -- (5.000000, 5.500000);
\draw[-stealth', line width=1pt, black!25] (5,6) -- (5.000000, 6.500000);
\draw[-stealth', line width=1pt, black!25] (5,7) -- (5.000000, 7.500000);
\draw[-stealth', line width=1pt, black!25] (5,8) -- (5.000000, 8.500000);
\draw[-stealth', line width=1pt, black!25] (6,0) -- (6.000000, -0.500000);
\draw[-stealth', line width=1pt, black!25] (6,1) -- (6.000000, 0.500000);
\draw[-stealth', line width=1pt, black!25] (6,2) -- (6.000000, 1.500000);
\draw[-stealth', line width=1pt, black!25] (6,3) -- (6.000000, 2.500000);
\draw[-stealth', line width=1pt, black!25] (6,4) -- (6.500000, 4.000000);
\draw[-stealth', line width=1pt, black!25] (6,5) -- (6.000000, 5.500000);
\draw[-stealth', line width=1pt, black!25] (6,6) -- (6.000000, 6.500000);
\draw[-stealth', line width=1pt, black!25] (6,7) -- (6.000000, 7.500000);
\draw[-stealth', line width=1pt, black!25] (6,8) -- (6.000000, 8.500000);
}
\end{tikzpicture}
\hspace{0.5cm}
\begin{tikzpicture}
\draw[white] (-0.5, -0.5) rectangle (2.800000, 3.600000);\node (particles) at (1.200000,-0.800000) {$n=15$};
\pgflowlevelobj{\pgflowlevel{\pgftransformscale{0.400000}}}
{
\draw[black!15, dotted] (0, -1) -- (0, 9);
\draw[black!15, dotted] (1, -1) -- (1, 9);
\draw[black!15, dotted] (2, -1) -- (2, 9);
\draw[black!15, dotted] (3, -1) -- (3, 9);
\draw[black!15, dotted] (4, -1) -- (4, 9);
\draw[black!15, dotted] (5, -1) -- (5, 9);
\draw[black!15, dotted] (6, -1) -- (6, 9);
\draw[black!15, dotted] (-1, 4) -- (7, 4);
\draw[-stealth', line width=1pt, black!25] (0,0) -- (0.000000, -0.500000);
\draw[-stealth', line width=1pt, black!25] (0,1) -- (0.000000, 0.500000);
\draw[-stealth', line width=1pt, black!25] (0,2) -- (0.000000, 1.500000);
\draw[-stealth', line width=1pt, black!25] (0,3) -- (0.000000, 2.500000);
\draw[-stealth', line width=1pt, black!25] (0,4) -- (-0.500000, 4.000000);
\draw[-stealth', line width=1pt, black!25] (0,5) -- (0.000000, 5.500000);
\draw[-stealth', line width=1pt, black!25] (0,6) -- (0.000000, 6.500000);
\draw[-stealth', line width=1pt, black!25] (0,7) -- (0.000000, 7.500000);
\draw[-stealth', line width=1pt, black!25] (0,8) -- (0.000000, 8.500000);
\draw[-stealth', line width=1pt, black!25] (1,0) -- (1.000000, -0.500000);
\draw[-stealth', line width=1pt, black!25] (1,1) -- (1.000000, 0.500000);
\draw[-stealth', line width=1pt, black!25] (1,2) -- (1.000000, 1.500000);
\draw[-stealth', line width=1pt, black!25] (1,3) -- (1.000000, 2.500000);
\coordinate[label=45:\small{$0$}] (1_4) at (1,4);
\draw[-stealth', line width=1pt, black] (1,4) -- (0.500000, 4.000000);
\draw[-stealth', line width=1pt, black!25] (1,5) -- (1.000000, 5.500000);
\draw[-stealth', line width=1pt, black!25] (1,6) -- (1.000000, 6.500000);
\draw[-stealth', line width=1pt, black!25] (1,7) -- (1.000000, 7.500000);
\draw[-stealth', line width=1pt, black!25] (1,8) -- (1.000000, 8.500000);
\draw[-stealth', line width=1pt, black!25] (2,0) -- (2.000000, -0.500000);
\draw[-stealth', line width=1pt, black!25] (2,1) -- (2.000000, 0.500000);
\draw[-stealth', line width=1pt, black!25] (2,2) -- (2.000000, 1.500000);
\coordinate[label=45:\small{$0$}] (2_3) at (2,3);
\draw[-stealth', line width=1pt, black] (2,3) -- (2.000000, 2.500000);
\coordinate[label=45:\small{$6$}] (2_4) at (2,4);
\draw[-stealth', line width=1pt, black] (2,4) -- (2.500000, 4.000000);
\coordinate[label=45:\small{$1$}] (2_5) at (2,5);
\draw[-stealth', line width=1pt, black] (2,5) -- (2.000000, 4.500000);
\draw[-stealth', line width=1pt, black!25] (2,6) -- (2.000000, 6.500000);
\draw[-stealth', line width=1pt, black!25] (2,7) -- (2.000000, 7.500000);
\draw[-stealth', line width=1pt, black!25] (2,8) -- (2.000000, 8.500000);
\draw[-stealth', line width=1pt, black!25] (3,0) -- (3.000000, -0.500000);
\coordinate[label=45:\small{$0$}] (3_1) at (3,1);
\draw[-stealth', line width=1pt, black] (3,1) -- (3.000000, 0.500000);
\coordinate[label=45:\small{$2$}] (3_2) at (3,2);
\draw[-stealth', line width=1pt, black] (3,2) -- (3.000000, 1.500000);
\coordinate[label=45:\small{$6$}] (3_3) at (3,3);
\draw[-stealth', line width=1pt, black] (3,3) -- (3.000000, 2.500000);
\coordinate[label=45:\small{$23$}] (3_4) at (3,4);
\draw[-stealth', line width=1pt, black] (3,4) -- (3.000000, 4.500000);
\coordinate[label=45:\small{$6$}] (3_5) at (3,5);
\draw[-stealth', line width=1pt, black] (3,5) -- (3.000000, 5.500000);
\coordinate[label=45:\small{$2$}] (3_6) at (3,6);
\draw[-stealth', line width=1pt, black] (3,6) -- (3.000000, 6.500000);
\coordinate[label=45:\small{$0$}] (3_7) at (3,7);
\draw[-stealth', line width=1pt, black] (3,7) -- (3.000000, 7.500000);
\draw[-stealth', line width=1pt, black!25] (3,8) -- (3.000000, 8.500000);
\draw[-stealth', line width=1pt, black!25] (4,0) -- (4.000000, -0.500000);
\draw[-stealth', line width=1pt, black!25] (4,1) -- (4.000000, 0.500000);
\draw[-stealth', line width=1pt, black!25] (4,2) -- (4.000000, 1.500000);
\coordinate[label=45:\small{$0$}] (4_3) at (4,3);
\draw[-stealth', line width=1pt, black] (4,3) -- (4.000000, 2.500000);
\coordinate[label=45:\small{$4$}] (4_4) at (4,4);
\draw[-stealth', line width=1pt, black] (4,4) -- (4.500000, 4.000000);
\coordinate[label=45:\small{$0$}] (4_5) at (4,5);
\draw[-stealth', line width=1pt, black] (4,5) -- (4.000000, 5.500000);
\draw[-stealth', line width=1pt, black!25] (4,6) -- (4.000000, 6.500000);
\draw[-stealth', line width=1pt, black!25] (4,7) -- (4.000000, 7.500000);
\draw[-stealth', line width=1pt, black!25] (4,8) -- (4.000000, 8.500000);
\draw[-stealth', line width=1pt, black!25] (5,0) -- (5.000000, -0.500000);
\draw[-stealth', line width=1pt, black!25] (5,1) -- (5.000000, 0.500000);
\draw[-stealth', line width=1pt, black!25] (5,2) -- (5.000000, 1.500000);
\draw[-stealth', line width=1pt, black!25] (5,3) -- (5.000000, 2.500000);
\coordinate[label=45:\small{$0$}] (5_4) at (5,4);
\draw[-stealth', line width=1pt, black] (5,4) -- (5.500000, 4.000000);
\draw[-stealth', line width=1pt, black!25] (5,5) -- (5.000000, 5.500000);
\draw[-stealth', line width=1pt, black!25] (5,6) -- (5.000000, 6.500000);
\draw[-stealth', line width=1pt, black!25] (5,7) -- (5.000000, 7.500000);
\draw[-stealth', line width=1pt, black!25] (5,8) -- (5.000000, 8.500000);
\draw[-stealth', line width=1pt, black!25] (6,0) -- (6.000000, -0.500000);
\draw[-stealth', line width=1pt, black!25] (6,1) -- (6.000000, 0.500000);
\draw[-stealth', line width=1pt, black!25] (6,2) -- (6.000000, 1.500000);
\draw[-stealth', line width=1pt, black!25] (6,3) -- (6.000000, 2.500000);
\draw[-stealth', line width=1pt, black!25] (6,4) -- (6.500000, 4.000000);
\draw[-stealth', line width=1pt, black!25] (6,5) -- (6.000000, 5.500000);
\draw[-stealth', line width=1pt, black!25] (6,6) -- (6.000000, 6.500000);
\draw[-stealth', line width=1pt, black!25] (6,7) -- (6.000000, 7.500000);
\draw[-stealth', line width=1pt, black!25] (6,8) -- (6.000000, 8.500000);
}
\end{tikzpicture}
\caption{\label{fig:symmetric_config_3}
The first three fully symmetric configurations, consisting of $n$ particles. The numbers 
on the arrows are the values of the odometer function $u_n$.}
\end{figure}

Figures \ref{fig:symmetric_config_3} and \ref{fig:symmetric_config_6_7}
show examples of fully symmetric configurations.
In Figure \ref{fig:symmetric_config_6_7}, one can observe that the fully symmetric
configuration of $\abs{B_7}$ particles, as well as its corresponding rotor-router
odometer function, are obtained by shifting ``half`` of the configuration of
$\abs{B_6}$ particles one step in the direction of the positive resp. negative
$x$-axis and filling in the values for the ''tooth`` corresponding to $x=0$.
It turns out that this is true for all fully symmetric configurations (with the
exception of the first 3). This property will play an important role in the 
proof of Theorem \ref{thm:rotor_shape}.

For proving Theorem \ref{thm:rotor_shape}, an exact
expression for the cardinality of the sets $B_m$ is needed.

\begin{lem}
\label{prop:cardinality_B_m}
Let $B_m$ be the set defined in \eqref{eq:B_m}, with $h(x) = \left\lfloor \frac{(x+1)^2}{3} \right\rfloor$. 
Then, for all $m\geq 0$ the cardinality of $B_m$ is given by
\begin{equation}
\label{eq:B_m_size}
\abs{B_m} = \frac{1}{9}\big[4 m^3 + 12 m^2 + 24 m + 5 + 2 \big((m+2)\bmod{3}\big)\big].
\end{equation}
\end{lem}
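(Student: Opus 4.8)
The idea is to count the vertices of $B_m$ one column (one ``tooth'') at a time along the backbone, reducing the problem to the evaluation of $\sum_{k=0}^{m}h(k)$, and then to compute this last sum from the classical formula for $\sum j^2$ together with elementary facts about squares modulo $3$.

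\textbf{Step 1 (counting by columns).} Since $\C$ has the same vertex set as $\Z^2$, for each fixed value of the first coordinate $x$ with $\abs{x}\leq m$ the set $B_m$ contains exactly the $2h(m-\abs{x})+1$ vertices $(x,y)$ with $\abs{y}\leq h(m-\abs{x})$. The value $\abs{x}=j$ is attained by two backbone positions for each $j\in\{1,\dots,m\}$, and by one position for $j=0$, so
\begin{equation*}
\abs{B_m} = \big(2h(m)+1\big) + 2\sum_{j=1}^{m}\big(2h(m-j)+1\big) = (2m+1) + 2h(m) + 4\sum_{k=0}^{m-1}h(k).
\end{equation*}

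\textbf{Step 2 (evaluating the sum).} Substituting $j=k+1$ gives $\sum_{k=0}^{m-1}h(k)=\sum_{j=1}^{m}\lfloor j^2/3\rfloor$. Now $j^2\equiv 0\pmod 3$ when $3\mid j$ and $j^2\equiv 1\pmod 3$ otherwise, so $\lfloor j^2/3\rfloor=\tfrac13(j^2-r_j)$ with $r_j\in\{0,1\}$ and $\sum_{j=1}^{m}r_j=m-\lfloor m/3\rfloor$. Together with $\sum_{j=1}^{m}j^2=m(m+1)(2m+1)/6$ this gives a closed form for $\sum_{k=0}^{m-1}h(k)$, and similarly $h(m)=\lfloor (m+1)^2/3\rfloor=\tfrac13\big((m+1)^2-s_m\big)$ with $s_m\in\{0,1\}$, where $s_m=0$ precisely when $m\equiv 2\pmod 3$.

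\textbf{Step 3 (assembling and the residue correction).} Plugging the formulas of Step 2 into Step 1 and clearing the factor $1/9$ leaves a cubic polynomial in $m$ plus the two periodic terms coming from $\lfloor m/3\rfloor$ and $s_m$. Splitting into the residue classes $m\equiv 0,1,2\pmod 3$, in each case $\lfloor m/3\rfloor$ is an explicit linear function of $m$ and $s_m$ is constant, and a direct simplification yields
\begin{equation*}
\abs{B_m} = \frac{1}{9}\big[4m^3 + 12m^2 + 24m + 5 + 2\big((m+2)\bmod 3\big)\big],
\end{equation*}
one checking that $2((m+2)\bmod 3)$ equals $4,0,2$ for $m\equiv 0,1,2$ respectively, matching the combined contribution of the two residue terms; the values $\abs{B_0}=1$, $\abs{B_1}=5$, $\abs{B_2}=15$ agree with Figure \ref{fig:symmetric_config_3}.

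\textbf{Main obstacle.} There is nothing deep here: the whole content is the bookkeeping of the two floor/residue contributions so that they collapse into the single stated term $2((m+2)\bmod 3)$, which just means carrying out the three residue cases carefully. An alternative is induction on $m$, using $\abs{B_{m+1}}-\abs{B_m}=2+2h(m+1)+2h(m)$ (immediate from Step 1) and verifying that the claimed formula obeys the same recursion; but this runs into the same floor bookkeeping, so the direct evaluation above is the cleanest route.
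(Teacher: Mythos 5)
Your proof is correct. I checked Step 1 (the column count $\abs{B_m}=(2m+1)+2h(m)+4\sum_{k=0}^{m-1}h(k)$), the reduction of the sum to $\sum_{j=1}^m\lfloor j^2/3\rfloor=\tfrac13\bigl(\tfrac{m(m+1)(2m+1)}{6}-m+\lfloor m/3\rfloor\bigr)$, and the final residue bookkeeping: multiplying through by $9$, the identity to verify is $12\lfloor m/3\rfloor-6s_m+10-4m=2\bigl((m+2)\bmod 3\bigr)$, which indeed gives $4,0,2$ in the three classes $m\equiv 0,1,2\pmod 3$, matching your table. Your route differs from the paper's: the paper restates the formula as three explicit cubics $N_i(k)$ for $m=3k+i$ and proves them by induction on $k$ via the recursion $\abs{B_{m+1}}=\abs{B_m}+2\bigl[h(m)+h(m+1)+1\bigr]$ — exactly the alternative you sketch and set aside at the end. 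Your direct summation has the advantage of producing the closed form (including the periodic correction term) rather than merely verifying it, at the cost of slightly more algebra in the assembly; the paper's induction is shorter to write down but presupposes that one has already guessed the three cubics. Both hinge on the same elementary fact that $j^2\bmod 3\in\{0,1\}$, so the choice is a matter of taste.
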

\begin{proof}
In order to simplify the statement of the Lemma, we have to distinguish 
three cases, namely for $m=3k+i$, with $i=0,1,2$. The right-hand side of 
\eqref{eq:B_m_size} is then equal to
\begin{align}
\label{eq:B_m_mod3}
\begin{split}
N_0(k)& = 12k^{3} + 12k^{2} + 8k + 1, \text{ for }m=3k\\
N_1(k)& = 12k^{3} + 24k^{2} + 20k + 5, \text{ for }m=3k+1\\
N_2(k)& = 12k^{3} + 36k^{2} + 40k + 15, \text{ for }m=3k+2.
\end{split}
\end{align}
Moreover, in the three cases $m=3k+i$, with $i=0,1,2$, the function $h$
can be written as follows.
\begin{align}
\label{eq:h_m_mod3}
\begin{split}
h(3k)   & =3k^2+2k\\
h(3k+1) & =3k^2+4k+1\\
h(3k+2) & =3k^2+6k+3.
\end{split}
\end{align}
We prove \eqref{eq:B_m_mod3} by induction on $k$. The base case $m=1$
is immediate from the definition of $B_m$. With
\begin{equation*}
\abs{B_{m+1}}=\abs{B_{m}}+2\big[h(m)+h(m+1)+1\big]
\end{equation*}
follows the inductive step.
\end{proof}
Next, we will find an exact formula for the odometer function of the 
rotor-router aggregation defined in Theorem \ref{thm:rotor_shape}, 
and we shall prove its correctness using Theorem \ref{thm:rotor_odometer_properties}.
For this, we first have a detailed look at the rotor-router process
on the non-negative integers.

\begin{figure}
\centering
\hspace{0.5cm}

\hspace{0.5cm}
\caption{\label{fig:symmetric_config_6_7}
The $6^\text{th}$ and $7^\text{th}$ fully symmetric configurations,
consisting of $\abs{B_6}$ and $\abs{B_7}$ particles. The numbers are the values of the
odometer function $u_6$ and $u_7$, respectively.}
\end{figure}

\subsection{Rotor-Router on the non-negative Integers}
\label{sec:rotor_router_on_integers}
For a better understanding of the rotor-router process on the comb  $\C$, 
we first analyse it on the half-line, where it is very simple. 
Consider $G = \N_0$, with sink vertex $0$, and the initial
rotor configuration $\tilde{\rho}_0:\N \to \N_0$ given by
\begin{equation*}
 \tilde{\rho}_0(y) = y+1,\quad \text{for all } y\in \N.
\end{equation*}
Let $\tilde{R}_{1}=\{1\}$, and define a modified rotor-router 
aggregation process $\tilde{R}_{n}$ recursively as follows. Start a rotor-router walk in $1$, and stop the
particle when it either reaches the sink $0$, or exits the previous cluster
$\tilde{R}_{n-1}$. Denote by $\tilde{z}_n$ the vertex where the $n$-th particle stops,
and by $\tilde{\rho}_n$ and $\tilde{u}_n$ the rotor configuration and odometer function at that
time. Then,
\begin{equation*}
\tilde{R}_{n}=
\begin{cases}
\tilde{R}_{n-1}\cup \{\tilde{z}_n\} , & \text{if } \tilde{z}_n\not=0 \\
\tilde{R}_{n-1} , & \text{otherwise}.
\end{cases}
\end{equation*}
Obviously $\tilde{R}_{n} = \{1,\ldots,\tilde h(n)\}$ for some sequence $\tilde h(n)$. Since $\tilde{\rho}_0$ 
is acyclic, all rotor configurations $\tilde{\rho}_n$ are acyclic and have the form
\begin{equation*}
\tilde{\rho}_n(y)= \begin{cases}
            y-1,\quad & 0\leq y\leq \tilde r(n)\\
            y+1,\quad & \text{otherwise},
           \end{cases}
\end{equation*}
for some numbers $0 \leq \tilde r(n) \leq  \tilde h(n)$. Here, $\tilde r(n)$ represents the vertex 
where the rotors change direction: all rotors up to $\tilde r(n)$ point inwards 
($\downarrow$), and all rotors from $\tilde r(n)+1$ up to $\tilde h(n)$ point outwards ($\uparrow$).

For numbers $h, r$ and $y$ in $\N_0$, with $0 \leq r \leq  h$ 
define the function $\tilde u$ as
\begin{equation}\label{eq:odometer_generic}
 \tilde{u}(h, r, y)=\begin{cases}
          f(h-y)+e(r-y),\quad & 1\leq y\leq r\\
          f(h-y),               & r < y\leq h\\
          0,                      &\text{otherwise},
         \end{cases} 
\end{equation}
where the functions $e$ and $f$ are given by $e(y)=2y+1$ and $f(y)=y(y+1)$.

The odometer function $\tilde{u}(n)$ of the rotor-router process $\tilde R_n$
can now be defined in terms of \eqref{eq:odometer_generic} by setting
\begin{align}
\label{eq:tilde_hr}
\tilde{h}(n) = \max\left\lbrace k\in\N:\: \frac{k(k+1)}{2} \leq n\right\rbrace, \qquad
\tilde{r}(n) = n - \frac{\tilde{h}(n)\big(\tilde{h}(n)+1\big)}{2}
\end{align}
and
\begin{equation}\label{eq:odometer_z}
\tilde{u}_n(y) = \tilde{u}\big(\tilde h(n), \tilde r(n),y\big),
\end{equation}
for $\tilde{h}(n)$ and $\tilde{r}(n)$ as defined in \eqref{eq:tilde_hr}.
It is easy to verify by induction that $\tilde{u}_n$ correctly describes the 
odometer function of $\tilde{R}_n$. See Figure \ref{fig:rotor_z} for a
graphical representation of the process $\tilde{R}_n$. 

\begin{figure}[t]
\centering
\begin{tikzpicture}
\draw[white] (-0.7, -0.5) rectangle (9.000000, 4.020000);\pgflowlevelobj{\pgflowlevel{\pgftransformscale{0.600000}}}
{
\draw[-stealth', black] (-0.5, -0.5) -- (-0.5, 6);
\coordinate[label=90:\Large{$\N_0$}](zz) at (-0.5, 6);
\coordinate[label=180:$0$] (y0) at (-0.5, 0);
\draw (-0.5, 0) -- (-0.52, 0);
\coordinate[label=180:$1$] (y1) at (-0.5, 1);
\draw (-0.5, 1) -- (-0.52, 1);
\coordinate[label=180:$2$] (y2) at (-0.5, 2);
\draw (-0.5, 2) -- (-0.52, 2);
\coordinate[label=180:$3$] (y3) at (-0.5, 3);
\draw (-0.5, 3) -- (-0.52, 3);
\coordinate[label=180:$4$] (y4) at (-0.5, 4);
\draw (-0.5, 4) -- (-0.52, 4);
\coordinate[label=180:$5$] (y5) at (-0.5, 5);
\draw (-0.5, 5) -- (-0.52, 5);

\draw[-stealth', black] (-0.5, -0.5) -- (15-0.5, -0.5);
\coordinate[label=0:\Large{$n$}](zz) at (15-0.5, -0.5);

\foreach \x in {1,...,15}
{
  \coordinate[label=-90:$\x$] (x\x) at (\x-1, -0.6);
  \draw (\x-1,-0.5) -- (\x-1, -0.52);
}

\draw[black!15, dotted] (0, 0) -- (0, 6-0.25);
\fill[black!15] (0, 0) circle (2pt);
\coordinate[label=45:\small{$0$}] (0_1) at (0,1);
\draw[-stealth', line width=1pt, black] (0,1) -- (0.000000, 1.500000);
\draw[-stealth', line width=1pt, black!25] (0,2) -- (0.000000, 2.500000);
\draw[-stealth', line width=1pt, black!25] (0,3) -- (0.000000, 3.500000);
\draw[-stealth', line width=1pt, black!25] (0,4) -- (0.000000, 4.500000);
\draw[-stealth', line width=1pt, black!25] (0,5) -- (0.000000, 5.500000);
\fill[red] (0, 1) circle (3pt);
\draw[black!15, dotted] (1, 0) -- (1, 6-0.25);
\fill[black!15] (1, 0) circle (2pt);
\coordinate[label=45:\small{$1$}] (1_1) at (1,1);
\draw[-stealth', line width=1pt, black] (1,1) -- (1.000000, 0.500000);
\draw[-stealth', line width=1pt, black!25] (1,2) -- (1.000000, 2.500000);
\draw[-stealth', line width=1pt, black!25] (1,3) -- (1.000000, 3.500000);
\draw[-stealth', line width=1pt, black!25] (1,4) -- (1.000000, 4.500000);
\draw[-stealth', line width=1pt, black!25] (1,5) -- (1.000000, 5.500000);
\fill[red] (1, 0) circle (3pt);
\draw[black!15, dotted] (2, 0) -- (2, 6-0.25);
\fill[black!15] (2, 0) circle (2pt);
\coordinate[label=45:\small{$2$}] (2_1) at (2,1);
\draw[-stealth', line width=1pt, black] (2,1) -- (2.000000, 1.500000);
\coordinate[label=45:\small{$0$}] (2_2) at (2,2);
\draw[-stealth', line width=1pt, black] (2,2) -- (2.000000, 2.500000);
\draw[-stealth', line width=1pt, black!25] (2,3) -- (2.000000, 3.500000);
\draw[-stealth', line width=1pt, black!25] (2,4) -- (2.000000, 4.500000);
\draw[-stealth', line width=1pt, black!25] (2,5) -- (2.000000, 5.500000);
\fill[red] (2, 2) circle (3pt);
\draw[black!15, dotted] (3, 0) -- (3, 6-0.25);
\fill[black!15] (3, 0) circle (2pt);
\coordinate[label=45:\small{$3$}] (3_1) at (3,1);
\draw[-stealth', line width=1pt, black] (3,1) -- (3.000000, 0.500000);
\coordinate[label=45:\small{$0$}] (3_2) at (3,2);
\draw[-stealth', line width=1pt, black] (3,2) -- (3.000000, 2.500000);
\draw[-stealth', line width=1pt, black!25] (3,3) -- (3.000000, 3.500000);
\draw[-stealth', line width=1pt, black!25] (3,4) -- (3.000000, 4.500000);
\draw[-stealth', line width=1pt, black!25] (3,5) -- (3.000000, 5.500000);
\fill[red] (3, 0) circle (3pt);
\draw[black!15, dotted] (4, 0) -- (4, 6-0.25);
\fill[black!15] (4, 0) circle (2pt);
\coordinate[label=45:\small{$5$}] (4_1) at (4,1);
\draw[-stealth', line width=1pt, black] (4,1) -- (4.000000, 0.500000);
\coordinate[label=45:\small{$1$}] (4_2) at (4,2);
\draw[-stealth', line width=1pt, black] (4,2) -- (4.000000, 1.500000);
\draw[-stealth', line width=1pt, black!25] (4,3) -- (4.000000, 3.500000);
\draw[-stealth', line width=1pt, black!25] (4,4) -- (4.000000, 4.500000);
\draw[-stealth', line width=1pt, black!25] (4,5) -- (4.000000, 5.500000);
\fill[red] (4, 0) circle (3pt);
\draw[black!15, dotted] (5, 0) -- (5, 6-0.25);
\fill[black!15] (5, 0) circle (2pt);
\coordinate[label=45:\small{$6$}] (5_1) at (5,1);
\draw[-stealth', line width=1pt, black] (5,1) -- (5.000000, 1.500000);
\coordinate[label=45:\small{$2$}] (5_2) at (5,2);
\draw[-stealth', line width=1pt, black] (5,2) -- (5.000000, 2.500000);
\coordinate[label=45:\small{$0$}] (5_3) at (5,3);
\draw[-stealth', line width=1pt, black] (5,3) -- (5.000000, 3.500000);
\draw[-stealth', line width=1pt, black!25] (5,4) -- (5.000000, 4.500000);
\draw[-stealth', line width=1pt, black!25] (5,5) -- (5.000000, 5.500000);
\fill[red] (5, 3) circle (3pt);
\draw[black!15, dotted] (6, 0) -- (6, 6-0.25);
\fill[black!15] (6, 0) circle (2pt);
\coordinate[label=45:\small{$7$}] (6_1) at (6,1);
\draw[-stealth', line width=1pt, black] (6,1) -- (6.000000, 0.500000);
\coordinate[label=45:\small{$2$}] (6_2) at (6,2);
\draw[-stealth', line width=1pt, black] (6,2) -- (6.000000, 2.500000);
\coordinate[label=45:\small{$0$}] (6_3) at (6,3);
\draw[-stealth', line width=1pt, black] (6,3) -- (6.000000, 3.500000);
\draw[-stealth', line width=1pt, black!25] (6,4) -- (6.000000, 4.500000);
\draw[-stealth', line width=1pt, black!25] (6,5) -- (6.000000, 5.500000);
\fill[red] (6, 0) circle (3pt);
\draw[black!15, dotted] (7, 0) -- (7, 6-0.25);
\fill[black!15] (7, 0) circle (2pt);
\coordinate[label=45:\small{$9$}] (7_1) at (7,1);
\draw[-stealth', line width=1pt, black] (7,1) -- (7.000000, 0.500000);
\coordinate[label=45:\small{$3$}] (7_2) at (7,2);
\draw[-stealth', line width=1pt, black] (7,2) -- (7.000000, 1.500000);
\coordinate[label=45:\small{$0$}] (7_3) at (7,3);
\draw[-stealth', line width=1pt, black] (7,3) -- (7.000000, 3.500000);
\draw[-stealth', line width=1pt, black!25] (7,4) -- (7.000000, 4.500000);
\draw[-stealth', line width=1pt, black!25] (7,5) -- (7.000000, 5.500000);
\fill[red] (7, 0) circle (3pt);
\draw[black!15, dotted] (8, 0) -- (8, 6-0.25);
\fill[black!15] (8, 0) circle (2pt);
\coordinate[label=45:\small{$11$}] (8_1) at (8,1);
\draw[-stealth', line width=1pt, black] (8,1) -- (8.000000, 0.500000);
\coordinate[label=45:\small{$5$}] (8_2) at (8,2);
\draw[-stealth', line width=1pt, black] (8,2) -- (8.000000, 1.500000);
\coordinate[label=45:\small{$1$}] (8_3) at (8,3);
\draw[-stealth', line width=1pt, black] (8,3) -- (8.000000, 2.500000);
\draw[-stealth', line width=1pt, black!25] (8,4) -- (8.000000, 4.500000);
\draw[-stealth', line width=1pt, black!25] (8,5) -- (8.000000, 5.500000);
\fill[red] (8, 0) circle (3pt);
\draw[black!15, dotted] (9, 0) -- (9, 6-0.25);
\fill[black!15] (9, 0) circle (2pt);
\coordinate[label=45:\small{$12$}] (9_1) at (9,1);
\draw[-stealth', line width=1pt, black] (9,1) -- (9.000000, 1.500000);
\coordinate[label=45:\small{$6$}] (9_2) at (9,2);
\draw[-stealth', line width=1pt, black] (9,2) -- (9.000000, 2.500000);
\coordinate[label=45:\small{$2$}] (9_3) at (9,3);
\draw[-stealth', line width=1pt, black] (9,3) -- (9.000000, 3.500000);
\coordinate[label=45:\small{$0$}] (9_4) at (9,4);
\draw[-stealth', line width=1pt, black] (9,4) -- (9.000000, 4.500000);
\draw[-stealth', line width=1pt, black!25] (9,5) -- (9.000000, 5.500000);
\fill[red] (9, 4) circle (3pt);
\draw[black!15, dotted] (10, 0) -- (10, 6-0.25);
\fill[black!15] (10, 0) circle (2pt);
\coordinate[label=45:\small{$13$}] (10_1) at (10,1);
\draw[-stealth', line width=1pt, black] (10,1) -- (10.000000, 0.500000);
\coordinate[label=45:\small{$6$}] (10_2) at (10,2);
\draw[-stealth', line width=1pt, black] (10,2) -- (10.000000, 2.500000);
\coordinate[label=45:\small{$2$}] (10_3) at (10,3);
\draw[-stealth', line width=1pt, black] (10,3) -- (10.000000, 3.500000);
\coordinate[label=45:\small{$0$}] (10_4) at (10,4);
\draw[-stealth', line width=1pt, black] (10,4) -- (10.000000, 4.500000);
\draw[-stealth', line width=1pt, black!25] (10,5) -- (10.000000, 5.500000);
\fill[red] (10, 0) circle (3pt);
\draw[black!15, dotted] (11, 0) -- (11, 6-0.25);
\fill[black!15] (11, 0) circle (2pt);
\coordinate[label=45:\small{$15$}] (11_1) at (11,1);
\draw[-stealth', line width=1pt, black] (11,1) -- (11.000000, 0.500000);
\coordinate[label=45:\small{$7$}] (11_2) at (11,2);
\draw[-stealth', line width=1pt, black] (11,2) -- (11.000000, 1.500000);
\coordinate[label=45:\small{$2$}] (11_3) at (11,3);
\draw[-stealth', line width=1pt, black] (11,3) -- (11.000000, 3.500000);
\coordinate[label=45:\small{$0$}] (11_4) at (11,4);
\draw[-stealth', line width=1pt, black] (11,4) -- (11.000000, 4.500000);
\draw[-stealth', line width=1pt, black!25] (11,5) -- (11.000000, 5.500000);
\fill[red] (11, 0) circle (3pt);
\draw[black!15, dotted] (12, 0) -- (12, 6-0.25);
\fill[black!15] (12, 0) circle (2pt);
\coordinate[label=45:\small{$17$}] (12_1) at (12,1);
\draw[-stealth', line width=1pt, black] (12,1) -- (12.000000, 0.500000);
\coordinate[label=45:\small{$9$}] (12_2) at (12,2);
\draw[-stealth', line width=1pt, black] (12,2) -- (12.000000, 1.500000);
\coordinate[label=45:\small{$3$}] (12_3) at (12,3);
\draw[-stealth', line width=1pt, black] (12,3) -- (12.000000, 2.500000);
\coordinate[label=45:\small{$0$}] (12_4) at (12,4);
\draw[-stealth', line width=1pt, black] (12,4) -- (12.000000, 4.500000);
\draw[-stealth', line width=1pt, black!25] (12,5) -- (12.000000, 5.500000);
\fill[red] (12, 0) circle (3pt);
\draw[black!15, dotted] (13, 0) -- (13, 6-0.25);
\fill[black!15] (13, 0) circle (2pt);
\coordinate[label=45:\small{$19$}] (13_1) at (13,1);
\draw[-stealth', line width=1pt, black] (13,1) -- (13.000000, 0.500000);
\coordinate[label=45:\small{$11$}] (13_2) at (13,2);
\draw[-stealth', line width=1pt, black] (13,2) -- (13.000000, 1.500000);
\coordinate[label=45:\small{$5$}] (13_3) at (13,3);
\draw[-stealth', line width=1pt, black] (13,3) -- (13.000000, 2.500000);
\coordinate[label=45:\small{$1$}] (13_4) at (13,4);
\draw[-stealth', line width=1pt, black] (13,4) -- (13.000000, 3.500000);
\draw[-stealth', line width=1pt, black!25] (13,5) -- (13.000000, 5.500000);
\fill[red] (13, 0) circle (3pt);
\draw[black!15, dotted] (14, 0) -- (14, 6-0.25);
\fill[black!15] (14, 0) circle (2pt);
\coordinate[label=45:\small{$20$}] (14_1) at (14,1);
\draw[-stealth', line width=1pt, black] (14,1) -- (14.000000, 1.500000);
\coordinate[label=45:\small{$12$}] (14_2) at (14,2);
\draw[-stealth', line width=1pt, black] (14,2) -- (14.000000, 2.500000);
\coordinate[label=45:\small{$6$}] (14_3) at (14,3);
\draw[-stealth', line width=1pt, black] (14,3) -- (14.000000, 3.500000);
\coordinate[label=45:\small{$2$}] (14_4) at (14,4);
\draw[-stealth', line width=1pt, black] (14,4) -- (14.000000, 4.500000);
\coordinate[label=45:\small{$0$}] (14_5) at (14,5);
\draw[-stealth', line width=1pt, black] (14,5) -- (14.000000, 5.500000);
\fill[red] (14, 5) circle (3pt);
}
\end{tikzpicture}
\caption{\label{fig:rotor_z}The first steps of the process $\tilde{R}_n$ on $\N$.
The dots mark the vertex where the current particle stopped.} 
\end{figure}
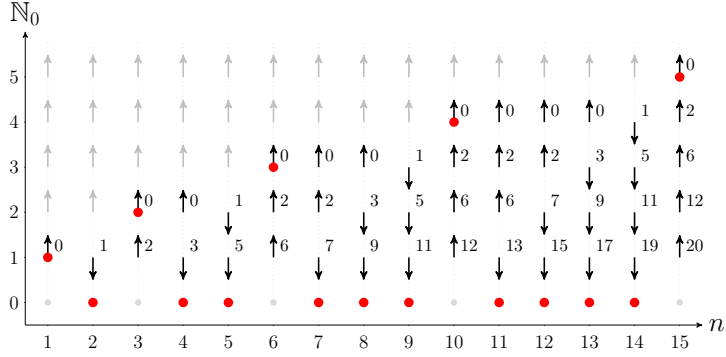

\subsection{Rotor-Router on the Comb}
Since the rotor-router aggregation on the ``half-teeth'' of $\C$ behaves like the process
$\tilde{R}_n$ from the previous section, it is enough to determine the numbers $h$ and $r$
in \eqref{eq:odometer_generic}, which now depend on $x$ and the number of particles, in order to fully
specify the odometer function on $\C$ for points off the $x$-axis.

Let $B_m$ as defined in \eqref{eq:B_m}, with $h(x) = \left\lfloor\frac{(x+1)^2}{3}\right\rfloor$ 
and define $r(x)$ by
\begin{equation}
\label{eq:def_r_x}
 r(x) =
\begin{cases}
 0, & x \in\{0,1\} \\
\frac{1}{18}\bigl(x^2-7x+10\bigr), & x \equiv 2\bmod{3} \\
\frac{1}{6}\bigl(x^2-x+6), \quad& \text{otherwise}.
\end{cases}
\end{equation}
Define $u_m:B_m\to \N$ by
\begin{equation}
\label{eq:comb_rotor_odometer}
 u_m(x,y) = u'(m-\abs{x},\abs{y}),
\end{equation}
 where
\begin{equation}\label{eq:comb_odometer_shifted}
 u'(x,y) =
\begin{cases}
  \tilde{u}\bigl(h(x),r(x),y\bigr), & y > 0 \\
  2 f\big( h(x) \big)+2 e\big(r(x)\big) - 2 - \indicator{\{x=2\}}, & y = 0,
\end{cases}
\end{equation}
with $\tilde{u}$ as in \eqref{eq:odometer_generic}, $h(x)$ and $r(x)$ defined as above,
and $e(x)=2x+1$ and $f(x)=x(x+1)$. We claim that $u_m$ is the odometer function 
for rotor-router aggregation of $\abs{B_m}$ particles on the comb.

\subsubsection{Proof of Theorem \ref{thm:rotor_shape}}
Let $B_m$ as defined in \eqref{eq:B_m}, with $h(x) = \lfloor(x+1)^2/3\rfloor$
as in Theorem \ref{thm:rotor_shape}, and $u_m$ defined as in \eqref{eq:comb_rotor_odometer}.

\begin{lem}
Let $\rho_0$ be the initial rotor configuration defined in Figure
\ref{initial_rotor_configuration}, and the initial particle
configuration $\sigma_0 = \abs{B_m}\cdot\delta_o$.
Furthermore, define $\rho_m$ and $\sigma_m$ as
\begin{equation*}
(\rho_m,\sigma_m) = F^{u_m}(\rho_0,\sigma_0),
\end{equation*}
with $u_m$ as in \eqref{eq:comb_rotor_odometer}. If a clockwise rotor sequence
is assumed for all vertices, then $u_m$ is the odometer function of the rotor-router 
aggregation with $\abs{B_m}$ particles and moreover $\sigma_m = \indicator{B_m}$, for all $m \geq 3$.
\end{lem}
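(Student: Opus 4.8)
The plan is to apply the Friedrich--Levine characterization (Theorem~\ref{thm:rotor_odometer_properties}) with $u_\star=u_m$ and $n=\abs{B_m}$. As $u_m$ is supported on $B_m$, the set $A_\star=\{v:u_m(v)>0\}$ lies in $B_m$ and is finite, so property~(b) holds. Everything then reduces to computing $(\rho_m,\sigma_m)=F^{u_m}(\rho_0,\sigma_0)$ and verifying that $\sigma_m=\indicator{B_m}$ --- which gives (a) immediately and (c) since $A_\star\subseteq B_m$ --- and that $\rho_m$ is acyclic on $B_m\supseteq A_\star$, which is (d). Because the final particle configuration of rotor-router aggregation is supported exactly on the cluster, $\sigma_m=\indicator{B_m}$ then also yields $R_{\abs{B_m}}=B_m$, which is Theorem~\ref{thm:rotor_shape}. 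Computing $F^{u_m}(\rho_0,\sigma_0)$ is mechanical: at each $v$ one has $\sigma_m(v)=\sigma_0(v)+(\text{inflow to }v)-u_m(v)$, the inflow from a neighbour $w$ being the number of times $w$'s rotor points at $v$ during its $u_m(w)$ clockwise rotations from $\rho_0(w)$, which equals $\lfloor u_m(w)/d(w)\rfloor$ or $\lceil u_m(w)/d(w)\rceil$ depending on the position of $v$ in the cyclic order at $w$.

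Off the $x$-axis the computation is entirely governed by Section~\ref{sec:rotor_router_on_integers}. By the $y\mapsto-y$ symmetry of $\rho_0$ and of the clockwise orientation, and because the half-tooth above a backbone vertex $(x,0)$ together with $(x,0)$ is a copy of $\N_0$ with $(x,0)$ playing the role of the sink (all tooth vertices having degree $2$), the part of $F^{u_m}$ acting on that half-tooth is exactly the process $\tilde R$ with parameters $h(x')$, $r(x')$ where $x'=m-\abs{x}$, \emph{provided} $(x,0)$ emits exactly $n_{x'}:=h(x')(h(x')+1)/2+r(x')$ particles into it; see \eqref{eq:tilde_hr}. A finite check in the residues $x'\equiv0,1,2\pmod3$, using \eqref{eq:def_r_x} and \eqref{eq:h_m_mod3}, confirms $0\le r(x')\le h(x')$, hence $\tilde h(n_{x'})=h(x')$ and $\tilde r(n_{x'})=r(x')$. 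Granting this feeding count, the correctness of $\tilde u_n$ noted after \eqref{eq:odometer_z} yields that column $x$ ends with one particle on each of $(x,\pm1),\dots,(x,\pm h(x'))$ and none beyond (so $\sigma_m=\indicator{B_m}$ there), that the rotors on the teeth are acyclic, and --- since the tips have odometer $f(0)=0$ and nothing escapes upward --- that the tooth returns $n_{x'}-h(x')$ particles back to $(x,0)$.

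The heart of the proof, and the step I expect to be the most delicate, is the balance along the backbone. Here $(x,0)$ has degree $4$ and $u_m(x,0)=2f(h(x'))+2e(r(x'))-2-\indicator{\{x'=2\}}$; since $f(y)=y(y+1)$ is always even and $e(y)=2y+1$ always odd, $2f(h(x'))+2e(r(x'))-2\equiv0\pmod4$, so $u_m(x,0)\equiv0\pmod4$ unless $x'=2$, in which case $u_m(x,0)\equiv3\pmod4$. Because the initial rotor at $(x,0)$ points \emph{along} the backbone (east if $x\ge0$, west if $x\le-1$), unwinding the four clockwise flows shows that $(x,0)$ emits $n_{x'}$ up, $n_{x'}$ down (matching the previous paragraph), $n_{x'}$ toward $o$, and $n_{x'}-\indicator{\{x'=2\}}$ away from $o$ --- and, at $o$ itself, $n_m$ in each direction, using $m\ge3$. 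Feeding these flows and the tooth return counts into $\sigma_m(x,0)=\sigma_0(x,0)+(\text{inflow})-u_m(x,0)$, the requirement $\sigma_m(x,0)=1$ for $\abs{x}\le m$ collapses to the second-difference identity
\[
n_{x'+1}-2n_{x'}+n_{x'-1}=2h(x')+1+\indicator{\{x'=1\}}-\indicator{\{x'=2\}},\qquad x'\ge1,
\]
a finite verification within each class mod~$3$, together with the initial values $n_0=0$, $n_1=1$; at $o$ ($x'=m\ge3$) this same identity, summed, telescopes to $\abs{B_m}=2h(m)+1+2n_m-2n_{m-1}$, which is precisely Lemma~\ref{prop:cardinality_B_m}. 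Backbone vertices outside $B_m$ receive nothing, so $\sigma_m=0$ there. The genuine work is therefore the careful accounting of the clockwise flows --- the mod-$4$ subtleties being exactly what dictate the correction term $-\indicator{\{x'=2\}}$ and the restriction $m\ge3$ --- and the mod-$3$ check of the displayed identity.

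Finally, with $\sigma_m=\indicator{B_m}$ proved, properties (a) and (c) of Theorem~\ref{thm:rotor_odometer_properties} hold and (b) is clear. For (d): $\rho_m$ is acyclic off the axis by the second paragraph, while on the axis every rotor points away from $o$ except at $(\pm(m-2),0)$, whose rotors point into the corresponding teeth --- and there, since $r(2)=0$, the adjacent tooth rotors point away from the axis, so no directed cycle can form. Theorem~\ref{thm:rotor_odometer_properties} then identifies $u_m$ as the odometer of $\abs{B_m}$ particles, and taking the support of $\sigma_m$ gives $R_{\abs{B_m}}=B_m$, completing the proof of the lemma and of Theorem~\ref{thm:rotor_shape}.
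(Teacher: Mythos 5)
Your proposal is correct and follows the same overall strategy as the paper --- verify the four conditions of Theorem \ref{thm:rotor_odometer_properties} for the explicit odometer $u_m$, then read off the cluster from $\sigma_m=\indicator{B_m}$ --- but you organize the mass-balance verification differently, and more economically. Where the paper checks $\sigma_m=1$ on the teeth by an explicit four-subcase analysis for $y\geq 2$ plus a separate $y=1$ computation with exceptional points $x\in\{1,2,5,8\}$, you treat each half-tooth wholesale as a copy of the process $\tilde R_n$ of Section \ref{sec:rotor_router_on_integers} fed with $n_{x'}=h(x')\bigl(h(x')+1\bigr)/2+r(x')$ particles; the $y=1$ exceptional points then disappear, because the return count $n_{x'}-h(x')$ to the backbone comes out the same whether $r(x')=0$ or $r(x')\geq 1$. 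On the backbone, the paper substitutes $f(h(x))$ and $e(r(x))$ and checks the three congruence classes separately for interior vertices (its Case 3) and once more for the center (its Case 4); your reparametrization by the per-direction emission count $n_{x'}$ collapses both into the single second-difference identity you display --- which I have verified, including the indicator corrections at $x'=1,2$ forced by $u'(2,0)=23\equiv 3\pmod 4$ --- and its telescoped form at the center is precisely Lemma \ref{prop:cardinality_B_m}. The acyclicity argument is the same as the paper's. The only cost of your route is that you must justify that the restriction of $F^{u_m}$ to a half-tooth is exactly the $\N_0$ process with the stated injection count, which follows from the definition of $u_m$ via \eqref{eq:odometer_z} once $0\leq r(x')\leq h(x')$ is checked, as you note; and, like the paper, you ultimately lean on the stated-but-unproved induction that $\tilde u_n$ is the odometer of $\tilde R_n$. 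No gaps.
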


\begin{proof}
To verify that $u_m$ is indeed the odometer function of this rotor-router process, 
we need to check the four properties of Theorem \ref{thm:rotor_odometer_properties}, with
\begin{equation*}
A_\star = B_m \setminus \partial B_m,
\end{equation*}
where $\partial B_m$ is the inner boundary of $B_m$ defined in \eqref{eq:boundary_Bm}.
The set $A_\star$ is obviously finite. For those vertices
$z\in A_\star$ that have neighbours in $B_m\setminus A_\star$, we have 
by \eqref{eq:odometer_z} and \eqref{eq:odometer_generic} that $u_m(z) \leq 3$ if $z$ 
is not on the $x$-axis, and $u_m(z) = 4$ otherwise. In both cases at most one particle is 
sent to some vertex outside of $A_\star$, hence $\sigma_m(z) \leq 1$ for all $z\not\in A_\star$.

Next we verify that the final particle configuration $\sigma_m$
is equal to $1$ on $ A_\star$. Since by definition $u_m$ is symmetric, it is
enough to consider only one quadrant. Additionally, we shift the coordinate system 
such that the point $(-m, 0)$ lies at the origin, which means that we can 
work with the function $u'$, defined in \eqref{eq:comb_odometer_shifted}. Since $u'$ does 
not depend on the parameter $m$, most of what follows holds independently of $m$. 
Only for the center $(m,0)$ of the set $B_m$ (Case 4), we need to take the parameter 
$m$ into account. Let $z=(x,y)\in A_\star$ with $x,y\geq 0$. We distinguish several cases.

\textbf{Case 1. $y\geq 2$}: For vertices $(x,y)$, with $y\geq 2$, the rotor-router aggregation behaves
as the process $\tilde{R}_n$, defined in Section \ref{sec:rotor_router_on_integers}.
From Figure \ref{fig:rotor_z}, and due to the fact that the final rotor configuration restricted to 
each ``tooth'' is acyclic, there are again four possible situations:
\begin{enumerate}[(a)]
\item
The rotors at the vertices $(x, y-1)$, $(x,y)$ and $(x,y+1)$ all point outwards
($\uparrow$). This is the case when $r(x)<y-1$, hence the vertex $(x,y)$ receives 
$\frac{1}{2}\tilde{u}_m(y-1)+\frac{1}{2}\tilde{u}_m(y+1)$ particles from its upper and
lower neighbours, and it sends $\tilde{u}_m(y)$ particles. That is,
\begin{equation*}
\sigma_m(x,y) = \frac{1}{2}\big[f\big(h(x)-y+1\big)+f\big(h(x)-y-1\big) \big]-f\big(h(x)-y\big)=1.
\end{equation*}
\item
The rotors at the vertices $(x,y-1)$, $(x,y)$ and $(x,y+1)$ all point inwards
($\downarrow$). Hence $r(x)\geq y+1$ and, comparing the numbers of incoming and
outgoing particles, we have
\begin{align*}
\sigma_m(x,y) & =  \frac{1}{2}\big[f\big(h(x)-y+1\big)+e\big(r(x)-y+1\big)+f\big(h(x)-y-1\big)\\
            &\phantom{=}\;+e\big(r(x)-y-1\big)\big]-f\big(h(x)-y\big)-e\big(r(x)-y\big)=1.
\end{align*}
\item When the rotors from $1$ to $y-1$ point inwards ($\downarrow$) and
from $y$ to $h(x)$ point outwards ($\uparrow$), then $r(x)=y-1$, and we have
\begin{align*}
\sigma_m(x,y)  &= \dfrac{1}{2}\big[f\big(h(x),r(x),y-1\big)+e(0)-1\big] \\
               &\phantom{=}\;+\frac{1}{2} f\big(h(x),r(x),y+1\big)-f\big(h(x),r(x),y\big)=1.
\end{align*}
\item The last case which can appear is when all rotors from $y$ to $0$
point inwards ($\downarrow$), and from $y+1$ to $h(x)$ outwards ($\uparrow$).
Then $r(x)=y$ and
\begin{align*}
 \sigma_m(x,y) &=  \dfrac{1}{2}\big[f\big(h(x),r(x),y-1\big)+e(1)-1\big] \\
               &\phantom{=}\;+\frac{1}{2} f\big(h(x),r(x),y+1\big)-f\big(h(x),r(x),y\big)-1=1.
\end{align*}
\end{enumerate}
Therefore $\sigma(x,y) = 1$ for all $(x,y)\in B_m$ with $y \geq 2$. Moreover, no closed cycle 
is formed by these vertices in the final rotor configuration $\rho_m$.

\textbf{Case 2. $y=1$}:
Consider $\sigma_m(z)$ for the vertex $z = (x,1)$.
For $x \geq 9$ the number of inwards pointing rotors $r(x)$ is always greater than $2$. So with
the exception of a finite number of exceptional points ($x\in\{1,2,5,8\}$), all relevant rotors on the teeth
are pointing inwards ($\downarrow$) and the vertex $z$ receives $\left\lceil\frac{1}{2}u'(x,2)\right\rceil$
particles from its upper neighbour. For $x \geq 3$, the number $u'(x,0)$ is divisible by $4$, so
all neighbours of $(x,0)$ receive exactly the same amount of particles. Hence
\begin{align*}
\sigma_m(x,1 ) &=  \frac{1}{4} u'(x,0) + \frac{1}{2}\big(u'(x,2)+1\big) - u'(x,1)= \frac{1}{4} \big[2f\big(h(x)\big)+2e\big(r(x)\big)-2\big]\\
               &\phantom{=}\;+\frac{1}{2}\big[f\big(h(x)-2\big)+e\big(r(x)-2\big)+1\big]-f\big(h(x)-1\big)-e\big(r(x)-1\big)\\
               &= \frac{1}{4}\big[2h(x)(h(x)+1)+2(2r(x)+1)-2\big]\\
               &\phantom{=}\;+\frac{1}{2}[(h(x)-2)(h(x)-1)+2(r(x)-2)+2]\\
               &\phantom{=}\;-(h(x)-1)h(x)-2(r(x)-1)-1=1.
\end{align*}
if $z$ is non-exceptional. At the exceptional points $z=(x, 1)$, for $x\in\{1,2,5,8\}$, 
the correctness of the function $u'$ can be verified by direct computation.

\textbf{Case 3. $x\not=m$ and $y=0$}: On the $x$-axis, the points $z = (x,0)$
for $x\in\{2,5\}$ are again  exceptional and need to be checked separately. The case
$x=0$ does not have to be checked at all, and $x=1$ has already been checked at the start of the proof.

For $x\notin \{2,5\}$, the vertex $z=(x,0)$ receives particles from $(x-1,0)$, $(x+1,0)$, $(x,1)$, $(x,-1)$.
Here $u'(x-1,0)$ and $u'(x+1, 0)$ are again both divisible by $4$. By symmetry $u'(x,1) = u'(x,-1)$,
and the number of inward pointing arrows $r(x) \geq 1$ in this case, hence $z$ receives $u'(x,1) + 1$
particles from its upper and lower neighbours combined. Thus
\begin{align*}
\begin{split}
\sigma_m(x,0) &= \frac{1}{4} u'(x-1, 0) + \frac{1}{4} u'(x+1, 0) + u'(x,1) + 1 - u'(x,0)\\
 &=\frac{1}{4}\big[2f\big(h(x-1)\big)+2e\big(r(x-1)\big)-2\big]+\frac{1}{4}\big[2f\big(h(x+1)\big)+2e\big(r(x+1)\big)-2\big]\\
              &\phantom{=}\;  +f\big(h(x)-1\big)+e\big(r(x)-1)\big)+1 -\big[2f\big(h(x)\big)+2e\big(r(x)\big)-2\big].
\end{split}
\end{align*}
Using that $f(x)=x(x+1)$ and $e(x)=2x+1$ we get
\begin{align}\label{eq:sigma_x_axis}
\begin{split}
\sigma_m(x,0)&=\frac{1}{2}\big[h^2(x-1)+h^2(x+1)-2h^2(x)\big]+\frac{1}{2}\big[h(x-1)+h(x+1)-6h(x)\big]\\
 &\phantom{=}\;+\big[r(x-1)+r(x+1)-2r(x)\big].
\end{split}
\end{align}
In order to check $\sigma_m(x,0)=1$, we have to substitute in equation \eqref{eq:sigma_x_axis}
the function $h(x) = \left\lfloor\frac{(x+1)^2}{3}\right\rfloor$
and the corresponding branch of the function $r(x)$ given in equation \eqref{eq:def_r_x},
depending on the congruence class mod $3$ of $x$. We have to check all three cases separately.
In all cases $\sigma_m(x,0)=1$ holds.

\textbf{Case 4.} Midpoint $z=(m,0)$: Everything until now was independent of the number of
particles $\abs{B_m}$. Since $u_m$ is created from $u'$ by translation and reflection,
the vertex $z=(m,0)$ after translation corresponds to the origin of the cluster.
At the beginning of the process, $\abs{B_m}$ particles are present at $z$,
so $\sigma_0(z) = \abs{B_m}$. We assume that $m$ is big enough, so that
none of the neighbours of $z$ is an exceptional point.

By symmetry, $z=(m,0)$ receives $\frac{1}{2} u'(m-1, 0)$ particles from its neighbours
on the $x$ axis, and $u'(m, 1) + 1$ particles from its neighbours on the teeth. Hence
\begin{align*}
\begin{split}
\sigma_m(m,0) & = \sigma_0(m,0) + \frac{1}{2} u'(m-1, 0) + u'(m, 1) + 1 - u'(m, 0)\\
              & = \abs{B_m}+\frac{1}{2}\big[2f\big(h(m-1)\big)+2e\big(r(m-1)\big)-2\big]+f\big(h(m)-1\big)\\
 &\phantom{=}\;    +e\big(r(m)-1\big)-2f\big(h(m)\big)-2e\big(r(m)\big)+3.          
\end{split}
\end{align*}
Here one has to check again each congruence class mod $3$ separately.
Substituting the formulas for $\abs{B_m}$ obtained in \eqref{eq:B_m_mod3},
into the previous equation, gives the desired result $\sigma_m(z) = 1$.

Finally, we need to check that the final rotor configuration $\rho_m$ is acyclic.
We work again with shifted coordinates. It is clear from the previous section 
that $\rho_m$ restricted to each ``tooth'' is acyclic. Hence it suffices to check that no 
cycles are created by rotors on the $x$-axis. If $z=(x, 0)$, the odometer $u_m(z)$ is divisible by
$4$, except when $x = 2$. So the rotors at these vertices point in the
same direction as in the initial configuration $\rho_0$. The odometer at the
exceptional point $w = \big(2, 0\big)$ is $u'(w) = 23 \equiv 3 \pmod{4}$
independent of $m$. Hence, this rotor points in the direction of one
``tooth''. If the rotor at position $(2,1)$ points towards the $x$-axis, it
creates a directed cycle. By \eqref{eq:def_r_x}, we have $r(2) = 0$, which means
that all arrows on this ``tooth'' are pointing outwards. Hence the rotor at $w$
does not close a cycle. See Figure \ref{fig:symmetric_config_6_7} for a
visualisation of the rotor configurations under consideration.

Therefore all properties of Theorem \ref{thm:rotor_odometer_properties} are
satisfied and this proves the statement.
\end{proof}

\begin{proof}[Proof of Theorem \ref{thm:rotor_shape}]
In the case $m\leq 2$, the statement of the Theorem follows by
direct calculation of the respective aggregation clusters, see
Figure \ref{fig:symmetric_config_3}.
For $m\geq 3$ it follows from the previous Lemma.
\end{proof}

\section{Harmonic Measure}
\label{sec:harm_measure}
In this section, as a direct application of rotor-router walks, 
we compute the \emph{harmonic measure} of the generic set 
$B_m\subset\C$ defined in \eqref{eq:B_m}.  The harmonic measure of $B_m$
is the hitting distribution of the set $\partial B_m$ for a simple random walk
on $\C$ starting at the origin $o$.

We shall first describe the method for finite subsets $B$ of general graphs $G$,
and then we apply it to the case of the comb $\C$ and subsets $B_m$ of the type
defined in \eqref{eq:B_m}. In Theorem \ref{thm:unif_harm_measure}, we identify 
the shape for which the harmonic measure is uniform. We point out that this shape does not
coincide with the rotor-router aggregation cluster from Theorem \ref{thm:rotor_shape}.
We will also describe the asymptotics of the harmonic measure for the rotor-router
aggregation clusters.

In order to estimate the harmonic measure, we shall use an idea of \textsc{Holroyd
and Propp} \cite{holroyd_propp}, which they used to show a variety of inequalities
concerning rotor-router walks and random walks. The method assigns a weight to the particle
and rotor configuration of a rotor-router process, which is invariant under routing
of particles in the system.

\subsection{Rotor Weights}
\label{subsec:rotor_weights}
Let $G$ be a locally finite and connected graph.
Start with a particle configuration $\sigma_0:G\to\Z$ and a rotor configuration
$\rho_0: G\to G$ such that $\rho_0(x) = x_0$ for all $x\in G$, that is, all
initial rotors point to the first neighbour in the rotor sequence $c(x)$.
We further assume that $\sigma_0$ has finite support, i.e., there are only finitely
many particles in the system, so that we don't need to deal with questions of
convergence. We will route particles in the system, and this gives rise to a
sequence $(\rho_t, \sigma_t)_{t\geq0}$ of particle and rotor configurations at
every time $t$. To each of the possible states $(\rho_t,\sigma_t)$ of the system,
we will assign a weight.

Fix a function $\psi:G\to \R$. We define the \emph{particle weights} at time $t$ to be
\begin{equation}
\label{eq:def_particles_weights}
\mathbf{W_P}(t) = \sum_{x\in G}\sigma_t(x) \psi(x).
\end{equation}
Further define the \emph{rotor weights} of vertices $x\in G$ as
\begin{equation}
\label{eq:def_rotor_weights}
w(x, k) =
\begin{cases}
0, &\text{for } k = 0 \\
w(x, k-1) + \psi(x) - \psi\big(x_{k\bmod d(x)}\big), &\text {for } k > 0,
\end{cases}
\end{equation}
where $x_i$ is the $i$-th neighbour of $x$ in the rotor sequence $c(x)$.
Notice that, for $k \geq d(x)$,
\begin{equation}
\label{eq:rotor_weight_laplace}
w(x,k) = w\big(x,k-d(x)\big) - d(x)\bigtriangleup \psi(x).
\end{equation}
Here $\bigtriangleup \psi(x)$ represents the Laplace operator which is defined as
\begin{equation*}
 \bigtriangleup \psi(x)=\frac{1}{d(x)}\sum_{y\in G:\: y\sim x}\big(\psi(y)-\psi(x)\big).
\end{equation*} 
The total \emph{rotor weights} at time $t$ are given by
\begin{equation*}
\mathbf{W_R}(t) = \sum_{x\in G} w(x,u_t(x)),
\end{equation*}
where $u_t(x)$ is the odometer function of this process, that is, the number of
particles sent out by the vertex $x$ in the first $t$ steps.
Note that $\rho_0$ is chosen in such a way that for all $t\geq 0$ and $x\in G$, if 
$i \equiv u_t(x) \bmod d(x)$, then $x_i = \rho_t(x)$.

It is easy to check that the sum of particle and rotor weights are invariant under
routing of particles, i.e., for all times $t,t'\geq 0$
\begin{equation}
\label{eq:rotor_weights_invariance}
\mathbf{W_P}(t) + \mathbf{W_R}(t) = \mathbf{W_P}(t') + \mathbf{W_R}(t').
\end{equation}

\subsection{Harmonic Measure for finite subsets of graphs}
\label{subsec:harm_measure}
As before, let $G$ be a locally finite, connected graph, and
let $B$ be some finite subset of $G$. Write
\begin{equation*}
\partial B = \big\{x\in B: \exists y\not\in B \text{ with } x\sim y \big\}
\end{equation*}
for the \emph{inner boundary} of $B$, and $B^\circ = B \setminus \partial B$. 
The vertices of $\partial B$ will represent the \emph{sink} $S$.

Similarly to Definition \ref{def:rotor_toppling_operator} of Section \ref{sec:rotor-router}, we
define the \emph{particle addition operator} $E_x$, for each vertex $x\in B^\circ$, as follows:
for a rotor configuration $\rho$, let
\begin{equation*}
E_x(\rho) = \rho',
\end{equation*}
where $\rho'$ is the rotor configuration obtained from $\rho$ by adding a new particle at
vertex $x$, and letting it perform a rotor-router walk until the particle reaches a 
vertex in $\partial B$ for the first time. 
By the abelian property of rotor-router walks the operators $E_x$ commute, and they can be
used to define an abelian group, see \cite{chip_rotor_2008} for details and
\cite[Lemma 3.10]{chip_rotor_2008} for the proof of the following statement.
\begin{lem}\label{lem:part_add_op}
The particle addition operator $E_x$ is a permutation on the set of acyclic rotor configurations
on $B^\circ$.
\end{lem}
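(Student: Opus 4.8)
The plan is to establish three facts. First, $E_x$ is well defined: the single particle released at $x$ is absorbed in $\partial B$ after finitely many steps. Second, $E_x$ maps the set $\mathcal{A}$ of acyclic rotor configurations on $B^\circ$ into itself. Third, the induced map $E_x\colon\mathcal{A}\to\mathcal{A}$ is injective. Since $B^\circ$ is finite and $G$ is locally finite, $\mathcal{A}$ is a finite set, so an injective self-map of $\mathcal{A}$ is automatically a bijection, i.e. a permutation; thus the three facts together prove the lemma.

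For the first fact, suppose the particle were never absorbed. As $B$ is finite, some vertex is visited infinitely often; let $V$ be the (nonempty) set of all such vertices. Each $v\in V$ has its rotor advanced infinitely often, hence completes infinitely many full turns of the cyclic order $c(v)$, and therefore passes the particle to every neighbour of $v$ infinitely often. A neighbour in $\partial B$ would stop the walk, so no vertex of $V$ is adjacent to $\partial B$; on the other hand every neighbour of a vertex of $V$ again lies in $V$. Taking any vertex of $V$ together with a path inside $B$ from it to a vertex of $\partial B$ (which exists because $B$ is finite) then produces a contradiction. Hence the walk terminates, and $E_x(\rho)$ is defined for every rotor configuration $\rho$.

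For the second fact, fix $\rho\in\mathcal{A}$ and let $x=p_0,p_1,\dots,p_\ell$ be the trajectory of the particle, so $p_0,\dots,p_{\ell-1}\in B^\circ$ and $p_\ell\in\partial B$; put $\rho'=E_x(\rho)$. Assume, for contradiction, that $\rho'$ contains a directed cycle $c_0\to c_1\to\dots\to c_{k-1}\to c_0$ inside $B^\circ$. If the particle visits none of the $c_i$, then $\rho'$ and $\rho$ agree on $\{c_0,\dots,c_{k-1}\}$, so the cycle is already present in $\rho$, contradicting acyclicity of $\rho$. Otherwise, let $c_j$ be the cycle vertex visited last by the particle, at some time $t$. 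Since $c_j\in B^\circ$, the particle is not absorbed at time $t$; it advances the rotor at $c_j$ and then moves to the vertex this rotor now points to, and — because $c_j$ is not visited again — that vertex equals $\rho'(c_j)=c_{j+1}$. Hence $c_{j+1}$ is visited at time $t+1$, contradicting the maximality of $t$. Therefore $\rho'$ is acyclic, and $E_x$ restricts to a self-map of $\mathcal{A}$.

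The third fact, injectivity, is the heart of the matter and the step I expect to be hardest. The approach is to construct a two-sided inverse of $E_x$ on $\mathcal{A}$ by running the rotor-router walk backwards: starting from a target configuration $\rho'\in\mathcal{A}$ one un-routes a single particle, step by step — when the particle sits at a vertex $p$ one recovers the neighbour $u$ from which it last moved into $p$, rotates the rotor at $u$ one step backwards in $c(u)$, and moves the particle back to $u$ — continuing until the particle returns to $x$. The delicate points, all of which should hinge on acyclicity, are: that the predecessor $u$ (and the correct boundary vertex at which the reverse walk begins) is forced, so that the reverse walk is deterministic; that it terminates precisely when the particle is at $x$ and leaves behind an acyclic configuration; and that the forward and reverse procedures are genuinely mutually inverse. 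This reversibility of rotor-router walks on acyclic configurations is classical — it is exactly the content of \cite[Lemma 3.10]{chip_rotor_2008} — and granting it, $E_x$ is a bijection of $\mathcal{A}$, hence a permutation, as claimed.
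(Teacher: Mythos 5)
The paper does not actually prove this lemma: it simply cites \cite[Lemma 3.10]{chip_rotor_2008}. Your proposal is therefore more detailed than the paper's treatment, and the parts you prove are correct: the termination argument (the set of infinitely-visited vertices would be closed under taking neighbours, yet $B$ is finite and $G$ is infinite and connected), the preservation of acyclicity (the ``last visited cycle vertex'' argument is the standard one and is sound — the rotor at that vertex ends up pointing along the cycle to a vertex visited strictly later, a contradiction), and the reduction of bijectivity to injectivity via finiteness of the set of rotor configurations on the finite set $B^\circ$. For the one genuinely hard step, injectivity, you end up invoking exactly the same reference the paper invokes for the whole lemma, so relative to the paper there is no gap; but be aware that if a self-contained proof were required, the reverse-walk construction you sketch is where the real work lies. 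In particular, the determinism of the backward step is not free: to un-route the particle from its current position $p$ you must identify the unique vertex $u$ from which it last arrived, and this is where acyclicity of the intermediate configurations is used (in an acyclic configuration together with the particle location one can read off the last traversed edge); making that precise is essentially the content of \cite[Lemma 3.10]{chip_rotor_2008}.
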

The \emph{rotor-router group} of $B^\circ$ is defined as the subgroup of permutations of oriented
spanning trees rooted at the sink (that is, acyclic rotor configurations) generated by
$\big\{E_x:\: x\in B^\circ\big\}$. For every finite graph $B^\circ$ the rotor-router group is a
finite abelian group, which is isomorphic to the \emph{abelian sandpile group}. See once again
\cite{chip_rotor_2008} for details.

Consider the \emph{simple random walk $(X_t)_{t\geq 0}$} on $G$, i.e., a Markov chain
with state space $G$, and transition probabilities given by
\begin{equation*}
 p(x,y)=\frac{1}{d(x)},\quad \text{ for all } x, y\in G, \text{ with } x\sim y.
\end{equation*}
Then $X_t$ is a $G$-valued random variable, and represents the random 
position of the random walker at the discrete time $t$. For given 
$x\in G$, we write $\mathbb{P}_x$ for the law of a random walk starting at $x$.
Consider the \emph{stopping time}
\begin{equation*}
T = \inf \big\{t\geq 0: X_t \in \partial B\big\}.
\end{equation*}
For $z\in \partial B$, let 
\begin{equation*}
\nu_x(z) = \P_x[X_T = z], 
\end{equation*}
be the \emph{harmonic measure} at $z$ with starting point $x$, that is, the probability 
that a random walk starting at $x$ hits $\partial B$ for the first time in $z$.

Take the harmonic measure itself as the weight function. More explicitly,
fix a vertex $z\in\partial B$, and define the weight function $\psi(x)$ as
\begin{equation*}
\psi(x) = \psi_z(x) = \nu_x(z).
\end{equation*}
Let us define the following process. Start with $n$ particles at the origin $o$,
and an arbitrary acyclic rotor configuration $\rho_0$. Let the particles
perform rotor-router walks until they reach a vertex in $\partial B$ for the first
time, where they stop. Denote by $t^\star = t^\star(n)$ the number of steps this
process takes to complete, and for each $w\in\partial B$, write $e(w)$ for the
number of particles that stopped in $w$ at the end of this procedure. We denote by $\hat u$ 
the normalized rotor-router odometer function of this process, that is, for all $x\in B$, 
\begin{equation*}
\hat u(x) = \frac{\text{number of particles sent out by $x$}}{d(x)}.
\end{equation*}
Using the invariance of the sum of rotor and particle weights under rotor-router
walks, as in \eqref{eq:rotor_weights_invariance}, we get
\begin{equation}
\label{eq:invariance_harmonic}
n \psi(o) = \sum_{w\in\partial B} e(w) \psi(w) + \mathbf{W_R}(t^\star),
\end{equation}
since $\mathbf{W_R}(0)=0$, $\mathbf{W_P}(0)=n \psi(0)$ and 
$\mathbf{W_P}(t^\star)=\sum_{w\in\partial B} e(w) \psi(w)$.
Equation \eqref{eq:invariance_harmonic} reduces to
\begin{equation}
\label{eq:harmonic_measure_weight_eq}
n \psi(o) = e(z) + \mathbf{W_R}(t^\star),
\end{equation}
because $\psi(w) = \nu_w(z) = \delta_w(z)$, if $w\in\partial B$.

The initial rotor configuration $\rho_0$ is chosen to be acyclic. Therefore, there exists a number $n$
such that, after all $n$ particles performed their rotor-router walks, all rotors in $B^\circ$ made only full
turns, i.e., $\rho_0 = \rho_t^\star$. This claim follows from Lemma \ref{lem:part_add_op}. 
Hence, $n$ is a multiple of the order of $E_o$ in the rotor-router group. Since $\psi$ is a 
harmonic function on $B^\circ$, using a $n$ with the above property gives
$\mathbf{W_R}(t^\star)=0$, which together with \eqref{eq:harmonic_measure_weight_eq} leads to 
\begin{equation}
\label{eq:harmonic_measure_proportional}
n\cdot\nu_o(z) = e(z).
\end{equation}
Thus, the harmonic measure $\nu_o$ of $B$ is proportional to the number of particles which stopped
at $\partial B$. While a number $n$ with the right property is difficult to calculate, we can still use
equation \eqref{eq:harmonic_measure_proportional} in order to derive asymptotics of the harmonic
measure of subsets $B_m$ of the comb $\C$, and in some cases even to calculate it explicitly.

\subsection{Subsets of the Comb}
Let us consider subsets $B_m$ of $\C$, of the type defined in \eqref{eq:B_m}, with generic 
positive function $h: \N_0 \to \N_0$. Recall here the definition of $B_m$.
\begin{equation*}
B_m = \big\lbrace (x,y)\in\C:\: \abs{x} \leq m, \abs{y} \leq h(m-\abs{x})\big\rbrace\quad\text{for }m \in\N.
\end{equation*}
By construction, all rotors make only full turns if we perform the rotor-router process 
from Section \ref{subsec:harm_measure}, for the set $B_m$. This implies that the corresponding
normalized odometer function $\hat u$ is harmonic outside the origin and its Laplacian is given by
\begin{equation}
\label{eq:dirichlet_hm}
\bigtriangleup \hat u(w) =
\begin{cases}
0, \quad & w\in B_m\setminus \big(\partial B_m \cup \{o\}\big) \\
-n,      & w = o,
\end{cases}
\end{equation}
and $\hat u(w) = 0$, for $w \in \partial B_m$. Write $\nu_{m,o}(w)$ for the harmonic measure of $B_m$ and $e_m(w)$ for the
number of rotor-router particles stopped in $w\in\partial B_m$. By symmetry of the set $B_m$, it is clear 
that also $e_m(w)$ and $\nu_{m,o}(w)$ are symmetric. More precisely,
if $w = (x, y)$ and $w' = (\abs{x}, \abs{y})$ then
\begin{equation*}
e_m(w) = e_m(w') \qquad\text{and}\qquad \nu_{m,o}(w) = \nu_{m,o}(w').
\end{equation*}
Hence it is enough to work in one quadrant. We will choose the second quadrant (i.e. $x\leq 0$ and
$y\geq 0$), and for simplicity of notation shift the set $B_m$ by $m$ in the direction of the positive $x$ axis, such
that the leftmost point of $B_m$ has coordinate $(0,0)$ and its center $o$ has coordinate $(m,0)$.
So, the set under consideration is now
\begin{equation*}
B^\Box_m=\big\{(x,y)\in \C:\: 0\leq x\leq m,\, 0\leq y \leq h(x)\big\} 
\end{equation*}
We will also use $\hat u$ for the normalized odometer function and $e_m$ for the number of 
particles which hit boundary points  in the shifted coordinate system. Additionally, since 
$e_m$ is defined only on $\partial B_m$ we write $e_m(x) = e_m(x, h(x))$, for $0\leq x \leq m$.

Solving the Dirichlet problem \eqref{eq:dirichlet_hm} on the ``teeth'' of the comb,
gives for $(x,y) \in B^\Box_m$,
\begin{equation}
\label{eq:hm_odo}
\hat u(x,y) = e_m(x)\cdot\big(h(x) - y\big).
\end{equation}
On the $x$ axis, for $(x, 0) \not= o$, the harmonicity gives
\begin{equation*}
\hat u(x+1,0) + \hat u(x-1, 0) + 2 \hat u(x,1) = 4 \hat u(x,0),
\end{equation*}
which together with \eqref{eq:hm_odo} leads to the following recursion for $e_m(x)$ and $0<x<m$:
\begin{equation}\label{eq:hm_rec}
e_m(x+1)h(x+1) + e_m(x-1)h(x-1) - 2e_m(x)\big(h(x) + 1\big) = 0.
\end{equation}

We are now ready prove Theorem \ref{thm:unif_harm_measure}, which characterizes sets of uniform harmonic measure.
\begin{proof}[Proof of Theorem \ref{thm:unif_harm_measure}]
Let $B_m$ be defined as in \eqref{eq:B_m}, with $h(x)=x^2$. 
From \eqref{eq:hm_rec} we get the recursion
\begin{equation}
\label{eq:hm_rec_x2}
e_m(x+1)(x+1)^2 + e_m(x-1)(x-1)^2 - 2 e_m(x)(x^2+1) = 0, \quad\text{for } 0 < x < m.
\end{equation}
Since $h(1) = 1$, the vertex $z=(1,0)$ in the shifted coordinate
system has three neighbours on the boundary $\partial B_m$.
By construction, the rotor at $(1,0)$ makes a number of full turns, 
hence all of these three neighbours receive the same amount of particles from $z$.
Therefore $e_m(0) = e_m(1)$.
By induction, it is easy to see that the sequence $e_m(x)$ is constant. 
Assuming $e_m(x-1) = e_m(x)$, the recursion \eqref{eq:hm_rec_x2} reduces to
\begin{equation*}
e_m(x+1)(x+1)^2 - e_m(x)(x+1)^2 = 0.
\end{equation*}
which implies that $e_m(x+1) = e_m(x)$. Because $e_m(x)$ is by construction proportional
to the harmonic measure $\nu_{m,o}$, we get the claim.
\end{proof}

In general we can compute the harmonic measure for all sets $B_m$, where \eqref{eq:hm_rec_bm} can
be solved exclicitly.

\subsection{Harmonic Measure of the Rotor-Router Cluster}
\label{subsec:harmonic_measure_bm}
This section is dedicated to the proof of Theorem \ref{lem:harmonic_measure}.
Recall that the rotor-router cluster obtained in Theorem \ref{thm:rotor_shape}
is a set of type $B_m$, as defined in \eqref{eq:B_m}, with
$h(x) = \left\lfloor {(x+1)^2}/3 \right\rfloor$.

Like before, let $e_m(x)$ be the number of particles stopped at boundary points 
$(x,h(x))\in\partial B_m$ in the rotor-router process defined in Section \ref{subsec:harm_measure}.
By linearity also the normalized sequence $e(x) = \frac{e_m(x)}{e_m(0)}$ is a solution of
the recurrence \eqref{eq:hm_rec}, and since $h(1) = 1$ we have $e(0)=e(1)=1$. Hence the function $e(x)$
is independent of $m$.
Rewriting \eqref{eq:hm_rec} in this case, we get a linear recurrence with non-polynomial coefficients.
While an explicit answer is not feasible, we can derive asymptotics of the special solution $e(x)$,
by converting the recurrence into an equivalent system of linear differential equations. 
We will prove Theorem \ref{lem:harmonic_measure} by showing that the function $e(x)$ has linear growth,
which is accomplished in the next two Lemmas.

\begin{lem}\label{lem:most_lin}
There exists a constant $c<\frac{1}{2}$ such that
\begin{equation*}
 \frac{e(x)}{x}\to c \text{ as } x\to \infty.
\end{equation*}
\end{lem}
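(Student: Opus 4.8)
The plan is to reduce the three-term recurrence \eqref{eq:hm_rec} to a coupled pair of first-order recurrences by a discrete ``first integral'', to read off linear growth of $e$ from the resulting linear differential system, and finally to pin the growth constant strictly below $\tfrac12$ by a quantitative comparison argument anchored at an explicitly computed initial segment. For the first step, put $P(x)=h(x)e(x)$ and $S(x)=\sum_{k=1}^{x}e(k)$ with $S(0)=0$; recalling $e(0)=e(1)=1$, $h(0)=0$ and $h(1)=1$, summing \eqref{eq:hm_rec} telescopically yields
\begin{equation*}
h(x)e(x)-h(x-1)e(x-1)=1+2S(x-1)\qquad(x\ge 1),
\end{equation*}
equivalently $P(x)=P(x-1)+1+2S(x-1)$ and $S(x)=S(x-1)+P(x)/h(x)$, hence the closed form $P(x)=x+2\sum_{k=1}^{x-1}(x-k)e(k)$. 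In particular $e(x)$ is a strictly increasing function of $e(1),\dots,e(x-1)$; by induction $e(x)>0$ for all $x$, so $P$ and $S$ are strictly increasing, $P(x)\ge x$, and $S(x)\to\infty$.

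Next, since $h(x)=\tfrac13 x^{2}+O(x)$, the homogeneous recurrence is a perturbation of the Euler equation $x^{2}\mathcal P''-6\mathcal P=0$, whose solutions are $x^{3}$ and $x^{-2}$, so one expects one solution of \eqref{eq:hm_rec} growing like $x$ and one decaying like $x^{-4}$. To make this precise, transform the system above into a linear system for the renormalized variables $p(x)=P(x)/x^{3}$, $s(x)=S(x)/x^{2}$: in the logarithmic variable it is an asymptotically autonomous perturbation, by terms summable in the natural discrete time, of $\dot p=-3p+2s$, $\dot s=3p-2s$, whose coefficient matrix has eigenvalues $0$ and $-5$ and null eigenvector $(1,\tfrac32)$. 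Since $\bar e(x)=x$ is a supersolution of the recurrence of the first paragraph (so $e(x)\le x$ for all $x\ge 1$), the renormalized orbit is bounded, and a discrete Grönwall estimate (equivalently, the standard theory of asymptotically autonomous linear systems) forces $(p(x),s(x))\to A(1,\tfrac32)$ for some $A\ge 0$; hence $e(x)/x=P(x)/\big(x\,h(x)\big)\to 3A=:c$. Because $S(x)\to\infty$ forces $A>0$, one also obtains $c>0$ — this is recorded in the companion lemma and is not needed here.

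It remains to show $c<\tfrac12$, which is the quantitative heart of the proof. As $c$ lies only slightly below $\tfrac12$ and $e(x)/x$ converges to $c$ with a slow $O(1/x)$ correction, the strict inequality cannot come from the tail alone; it must use the actual initial data. Compute $e(1),\dots,e(x_{0})$ exactly for a suitable threshold $x_{0}$, set $S_{0}=\sum_{k\le x_{0}}e(k)$ and $T_{0}=\sum_{k\le x_{0}}k\,e(k)$, and fix a rational $\alpha<\tfrac12$. Using the closed form $P(x)=x+2\sum_{k=1}^{x-1}(x-k)e(k)$, the elementary bound $h(x)\ge\big((x+1)^{2}-2\big)/3$, and the inductive hypothesis $e(k)\le\alpha k$ for $x_{0}<k<x$, the inequality $e(x)\le\alpha x$ reduces to a quadratic inequality in $x$ whose coefficients are explicit in $\alpha$, $S_{0}$, $T_{0}$ and $x_{0}$; for an appropriate choice of $x_{0}$ and $\alpha$ this quadratic has negative discriminant, hence holds for all $x>x_{0}$. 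The induction then yields $e(x)\le\alpha x$ for every $x>x_{0}$, so $c=\lim_{x\to\infty}e(x)/x\le\alpha<\tfrac12$.

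The main obstacle is this last step: the bound is not robust, so one has to produce a linear supersolution — equivalently, carry out an honest finite computation of the first several values $e(k)$ — delicate enough to beat the true value of $c$; the earlier steps are essentially routine once the telescoping identity is available. A pervasive technical nuisance throughout is the floor in $h(x)=\lfloor(x+1)^{2}/3\rfloor$, which splits the polynomial-in-$x$ verifications (both in the error bookkeeping of the differential-system step and in the quadratic of the comparison step) into residue classes modulo $3$.
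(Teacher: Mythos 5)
Your route is genuinely different from the paper's, and the convergence half of it is sound. The paper substitutes $\tilde e(x)=e(x)/x$ directly into \eqref{eq:hm_rec} and proves by a three-case induction modulo $3$ (with explicit contraction factors $\tilde f_i(x)<1$) that $\tilde e$ is strictly decreasing; convergence is then immediate from positivity. You instead telescope the recurrence into $h(x)e(x)=x+2\sum_{k=1}^{x-1}(x-k)e(k)$, deduce $0<e(x)\le x$ by comparison (I checked that $h(x)\ge (x^2+2)/3$ in all three residue classes, so $\bar e(x)=x$ really is a supersolution), and extract convergence of $P(x)/x^3$ from a Levinson/Poincar\'e--Perron argument for the asymptotically autonomous $2\times 2$ system; the perturbations coming from the floor in $h$ are $O(1/x^2)$ and hence summable, so this works. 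One incidental slip: your parenthetical claim that $S(x)\to\infty$ forces $A>0$ is false ($A=0$ only says $S(x)=o(x^2)$), but you correctly note that $c>0$ is not needed for this lemma.

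The genuine gap is in the inequality $c<\tfrac12$, which you yourself identify as the quantitative heart. Your argument reduces it to the existence of a threshold $x_0$ and a rational $\alpha<\tfrac12$ for which (i) $e(x_0+1)\le\alpha(x_0+1)$ holds on the computed initial segment and (ii) an explicit quadratic in $x$, whose coefficients involve $\sum_{k\le x_0}e(k)$ and $\sum_{k\le x_0}k\,e(k)$, is nonnegative for all $x>x_0$. You never exhibit $x_0$ or $\alpha$, never compute the initial segment, and never verify the discriminant condition; you only assert that ``for an appropriate choice'' it works. Since $e(x)/x$ stays above $\tfrac12$ until roughly $x=20$ and the limit lies only slightly below $\tfrac12$, this is exactly the regime where such an assertion cannot be waved through: the whole lemma hinges on this finite check, and as written it is missing. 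This is also where the paper's monotonicity argument earns its keep: once $\tilde e$ is known to be decreasing, $c<\tfrac12$ follows from a single computed inequality $\tilde e(x)<\tfrac12$ at one value of $x$, whereas your route, having discarded monotonicity, must replace it by the unexecuted supersolution computation. To complete your proof you would need to actually produce $x_0$, $\alpha$, the exact values $e(1),\dots,e(x_0+1)$, and the verification of the quadratic inequality (split, as you note, over residue classes modulo $3$).
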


\begin{proof}
Substitute $\tilde{e}(x) = \frac{e(x)}{x}$ for $x>0$, which transforms \eqref{eq:hm_rec} into
\begin{equation}
\label{eq:hm_rec_bm}
\tilde{e}(x-1)(x-1)h(x-1) + \tilde{e}(x+1)(x+1)h(x+1) - 2\tilde{e}(x) x \big( h(x) + 1 \big) = 0.
\end{equation}
The sequence $\tilde{e}(x)$ converges if and only if $e(x)$ grows at most linearly. 
Since $e(x)$ is positive by construction, it suffices to check that $\tilde{e}(x)$ is decreasing. 
For this, consider the auxiliary function $h'(x) = \frac{(x+1)^2}{3} - \frac{1}{3}$. 
We have to distinguish three cases
\begin{equation*}
h(x) =
\begin{cases}
h'(x), \quad & x \equiv 0\mod{3} \\
h'(x), \quad & x \equiv 1\mod{3} \\
h'(x) + \frac{1}{3}, \quad & x \equiv 2\mod{3} \\
\end{cases}
\end{equation*}
We prove the monotonicity of $\tilde{e}(x)$ by induction. 
Assuming $\tilde{e}(x) < \tilde{e}(x-1)$
for $x \equiv 0\mod{3}$ we show that $\tilde{e}(x+3) < \tilde{e}(x+2) < \tilde{e}(x+1) < \tilde{e}(x)$.
The induction base follows by calculating the first elements of the sequence.

\textbf{Case 1.}
Assume $x \equiv 0\mod{3}$ and $\tilde{e}(x) < \tilde{e}(x-1)$. Then \eqref{eq:hm_rec_bm} 
can be rewritten as
\begin{equation*}
\tilde{e}(x+1)(x+1)h'(x+1) = 2\tilde{e}(x) x \big(h'(x) + 1\big) -
                             \tilde{e}(x-1)(x-1)\left(h'(x-1) + \tfrac{1}{3}\right).
\end{equation*}
Using the induction hypothesis and the definition of $h'(x)$, we get
\begin{equation}
\label{eq:hm_bm0}
\tilde{e}(x+1) < \tilde{f}_0(x) \cdot \tilde{e}(x),
\end{equation}
with
$\tilde{f}_0(x) =\frac{x^2 + 2x}{x^2 + 2x + 1} < 1$,
which implies $\tilde{e}(x+1) < \tilde{e}(x)$.

\textbf{Case 2.} Assume $x \equiv 1\mod{3}$ and  $\tilde{e}(x) < \tilde{f}_0(x-1) \cdot \tilde{e}(x-1)$. 
Like before, rewrite \eqref{eq:hm_rec_bm} as
\begin{equation*}
\tilde{e}(x+1)(x+1)\big(h'(x+1) + \tfrac{1}{3}\big) = 2\tilde{e}(x) x \big(h'(x) + 1\big) -
                                                      \tilde{e}(x-1)(x-1) h'(x-1).
\end{equation*}
This gives, by \eqref{eq:hm_bm0} 
\begin{equation}
\label{eq:hm_bm1}
\tilde{e}(x+1) < \tilde{f}_1(x) \cdot \tilde{e}(x),
\end{equation}
for 
\begin{align*}
\tilde{f}_1(x) &= \frac{2x\big(h'(x) + 1\big) - \tilde{f}_0(x-1)^{-1}(x-1)
                   h'(x-1)}{(x+1)\big(h'(x+1)+\tfrac{1}{3}\big)} \\
               &= \frac{x^2 + 3x}{x^2 + 3x + 2} < 1,
\end{align*}
which implies $\tilde{e}(x+1) < \tilde{e}(x)$.

\textbf{Case 3.} Finally, assuming $x \equiv 2\mod{3}$ and
 $\tilde{e}(x) < \tilde{f}_1(x-1) \cdot \tilde{e}(x-1)$, we get
\begin{equation*}
\tilde{e}(x+1)(x+1)h'(x+1) = 2\tilde{e}(x) x \big(h'(x) + \tfrac{4}{3}\big) -
                             \tilde{e}(x-1)(x-1) h'(x-1).
\end{equation*}
Applying \eqref{eq:hm_bm1}, we obtain
\begin{equation}
\label{eq:hm_bm2}
\tilde{e}(x+1) < \tilde{f}_2(x) \cdot \tilde{e}(x),
\end{equation}
for the function $\tilde{f}_2(x) =  \frac{x^4 + 7 x^3 + 17 x^2 + 17 x}{x^4 + 7 x^3 + 17 x^2 + 17 x + 6} < 1$.

This shows that $\tilde{e}(x)$ is decreasing and therefore convergent, which also means
that there exists a constant $c$ such that $\frac{e(x)}{x}\to c$, as $x\to \infty$.
The fact that $c < \frac{1}{2}$ follows by computing the first few values of the sequence $e(x)$, 
using $e(0)=e(1)=1$ as starting values in the recursion \eqref{eq:hm_rec}.
By monotonicity we then get $\tilde{e}(x) < \frac{1}{2}$ for all $x \geq 20$.
\end{proof}

The next result shows that the function $e(x)$ has at least linear growth.
\begin{lem}\label{lem:least_lin}
The constant $c$ in Lemma \ref{lem:most_lin} is strictly positive.
\end{lem}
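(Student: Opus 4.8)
The plan is to argue that our sequence $e$ cannot coincide with the recessive solution of the recurrence \eqref{eq:hm_rec}, and hence must grow linearly with a positive rate. Three ingredients enter: the information already in Lemma \ref{lem:most_lin} (that $e>0$, that $\tilde e(x)=e(x)/x$ is decreasing, and that $c=\lim_x \tilde e(x)$ exists), a crude pointwise lower bound on $e$, and an asymptotic description of the two-dimensional solution space of \eqref{eq:hm_rec}.

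First I would record the elementary lower bound. Put $g(x)=h(x)e(x)$ and $S(x)=\sum_{k=1}^{x}e(k)$. Since $h(0)=0$ and $h(1)e(1)=1$, rewriting \eqref{eq:hm_rec} as $g(x+1)-2g(x)+g(x-1)=2e(x)$ and summing yields $g(x+1)-g(x)=1+2S(x)$ for all $x\ge 1$; because $e>0$ we have $S(x)\ge e(1)=1$, so $g(x+1)-g(x)\ge 3$ and therefore $g(x)\ge 3x-2$. Using $h(x)=\bigl\lfloor (x+1)^2/3\bigr\rfloor\le (x+1)^2/3$ this gives
\begin{equation*}
e(x)=\frac{g(x)}{h(x)}\ \ge\ \frac{3(3x-2)}{(x+1)^2}, \qquad x\ge 1 ,
\end{equation*}
and in particular $x^{4}e(x)\to\infty$.

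Next comes the asymptotic analysis of \eqref{eq:hm_rec}, which is the step alluded to before the statement: convert the recurrence into an equivalent first-order system of linear differential equations and read off the growth exponents. The subtlety is that the limiting transfer matrix of \eqref{eq:hm_rec} is $\bigl(\begin{smallmatrix}2&-1\\1&0\end{smallmatrix}\bigr)$, which has the doubly degenerate eigenvalue $1$, so Poincar\'e--Perron gives nothing and one must resolve this critical case through the $x^{-1}$-corrections. Passing to $w(x)=(x+1)^2e(x)$ brings \eqref{eq:hm_rec} to a form whose leading part is the discrete Euler equation $w(x+1)-2w(x)+w(x-1)=\bigl(6/(x+1)^2+O(x^{-3})\bigr)w(x)$, the $O(x^{-3})$ absorbing both the error in $h(x)\approx (x+1)^2/3$ and its oscillation mod $3$ (alternatively one works with the three-step transfer matrix to remove the periodicity outright). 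The indicial equation $\alpha(\alpha-1)=6$ has roots $\alpha=3,-2$, i.e.\ exponents $1$ and $-4$ for $e$, and a Levinson-type theorem for the perturbed equation then produces a fundamental system $\{y_+,y_-\}$ of \eqref{eq:hm_rec} with $y_+(x)\asymp x$ (so $y_+(x)>0$ for large $x$ and $\liminf_x y_+(x)/x>0$) and $y_-(x)=O(x^{-4})$; in particular there is \emph{no} solution of intermediate growth.

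Finally I would assemble the pieces. Write $e=Ay_++By_-$. By Step~1 the sequence $e$ is not $O(x^{-4})$, so $A\neq 0$; since $e>0$ while $y_-\to 0$ and $y_+>0$ eventually, necessarily $A>0$. Hence
\begin{equation*}
c=\lim_{x\to\infty}\frac{e(x)}{x}=A\cdot\lim_{x\to\infty}\frac{y_+(x)}{x}>0 ,
\end{equation*}
the limit existing by Lemma \ref{lem:most_lin} and being strictly positive because $A>0$ and $\liminf_x y_+(x)/x>0$. The only genuine obstacle is the middle step: \eqref{eq:hm_rec} is a resonant recurrence, so one cannot quote Poincar\'e--Perron and must run the finer asymptotic machinery (the ODE-system/Levinson argument, with the mod-$3$ oscillation treated as a summable perturbation) to extract the exponents $1$ and $-4$ and, crucially, the dichotomy that nothing lies in between. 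The lower bound of Step~1 and the bookkeeping of the last step are then routine.
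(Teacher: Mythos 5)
Your route is genuinely different from the paper's. The paper splits $e$ into residue classes mod $3$, passes to generating functions, eliminates (via Gauss elimination over the Weyl algebra) down to a single seventh-order ODE for $E_2(z)$, and reads off the growth exponent from the indicial polynomial at the singularity $z=1$; you instead work directly with the second-order recurrence \eqref{eq:hm_rec}. Two of your three ingredients are correct and easily verified: the summation identity $g(x+1)-g(x)=1+2\sum_{k\leq x}e(k)$ for $g=he$ does follow from \eqref{eq:hm_rec} together with $g(0)=0$, $g(1)=1$, and it yields the pointwise bound $e(x)\geq 3(3x-2)/(x+1)^2\sim 9/x$; and the final bookkeeping ($A\neq 0$ because $e$ is not $O(x^{-4})$, then $A>0$ because $e>0$, hence $c>0$) is sound once the middle step is granted. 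Your predicted exponents also agree with the paper's computation: the indicial roots $\theta=3$ and $\theta=-2$ correspond via $\beta=-\theta-1$ to $k^{-4}$ and $k^{1}$, and the remaining five roots are spurious solutions of the seventh-order ODE that do not arise from the two-dimensional solution space of \eqref{eq:hm_rec}. In fact your Step 1 supplies something the paper's own argument passes over quickly, namely an a priori reason why $e$ cannot lie in the subdominant part of the solution space.

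The gap is exactly where you locate it, but it is heavier than your phrasing suggests: the existence of a fundamental system $\{y_+,y_-\}$ with $y_+\asymp x$, $y_-=O(x^{-4})$ and \emph{nothing in between} is the entire analytic content of the lemma, and ``a Levinson-type theorem for the perturbed equation'' cannot simply be quoted. Writing \eqref{eq:hm_rec} as a $2\times 2$ transfer system, the limit matrix is a single Jordan block with eigenvalue $1$ and the perturbation is only $O(x^{-2})$, so the dichotomy hypotheses of Levinson/Benzaid--Lutz fail at the double eigenvalue; one must first shear (or pass to the three-step transfer matrix to remove the mod-$3$ oscillation) and then carry out the discrete analogue of asymptotically integrating $w''=\bigl(6/x^2+O(x^{-3})\bigr)w$ in the resonant case, e.g.\ via Birkhoff--Trjitzinsky theory. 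This is true and provable, but as written your proof asserts the conclusion of that step rather than establishing it, and carrying it out rigorously is at least as much work as the paper's computer-assisted singularity analysis.
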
 
\begin{proof}
To show that $c>0$, we use singularity analysis of linear differential equations. 
For this, we split $e(x)$ into three sequences modulo $3$, i.e., for $k\in\N$ write
\begin{align*}
e_i(k) &= e(3k+i) \quad \text{for } i=0,1,2,
\end{align*}
and rewrite \eqref{eq:hm_rec} for each congruence
class of $x \bmod{3}$ in terms of $k$. This leads to a system of linear recursions which
can be written in matrix form as
\begin{equation}
\label{eq:hm_bm_matrix_rec}
A_k\cdot \vec{e}(k-1) = B_k\cdot \vec{e}(k),
\end{equation}
with $\vec{e}(k) = \big(e_0(k), e_1(k), e_2(k)\big)^t$, and the matrices $A_k$ and $B_k$ given as
\scriptsize
\begin{align*}
A_k = \begin{pmatrix}
0 & 3 k^2 - 2 k & -6 k^2 - 2\\[0.5em]
0 & 0 & 3 k^2 \\[0.5em]
0 & 0 & 0
\end{pmatrix},
\quad
B_k = \begin{pmatrix}
-3 k^2 - 2k & 0 & 0 \\[0.5em]
6 k^2 + 4 k + 2 & -3 k^2 - 4 k - 1 & 0 \\[0.5em]
3 k^2 + 2 k & -6 k^2 - 8 k - 4 & 3 k^2 + 6 k +3
\end{pmatrix}.
\end{align*}
\normalsize
The initial values are given by $\vec{e}(0) = \big(1,1,\tfrac{4}{3}\big)^t$.
Denote by $E_i(z) = \sum_{k\geq 0}e_i(k) z^k$ the generating function of $e_i(k)$, $i=0,1,2$.
Using the identities
\begin{equation*}
\sum_{k\geq 0}k e_i(k)z^k = z\frac{\partial}{\partial z} E_i(z)\quad \text{and}\quad
\sum_{k\geq 0}k^2 e_i(k)z^k = z^2\frac{\partial^2}{\partial z^2} E_i(z) + z\frac{\partial}{\partial z} E_i(z),
\end{equation*}
the matrix recursion \eqref{eq:hm_bm_matrix_rec} can be transformed into the
following system of linear differential equations for the generating functions $E_i(k)$
\begin{equation}
\label{eq:hm_bm_diffeq}
C\cdot \vec{E}(z) = b,
\end{equation}
where $\vec{E}(z) = \big(E_0(z), E_1(z), E_2(z)\big)^t$, and $C$ is a matrix of linear differential
operators given as
\scriptsize
\begin{align*}
C = \begin{pmatrix}
5 \p + 3 z \pp & 1 + 7 z \p + 3 z^2\pp & -8 -18 z\p -6 z^2\pp \\[1.5em]
-2 -10 z\p -6 z^2 \pp & 1 + 7 z \p + 3 z^2\pp & 3 z + 9 z^2 \p + 3 z^3 \pp \\[1.5em]
5 z\p + 3 z^2 \pp & -4 -14 z\p-6 z^2\pp & 3 + 9 z \p + 3 z^2 \pp
\end{pmatrix},
\quad 
b = \begin{pmatrix}
0 \\
e_1(0) - 2 e_0(0) \\
0
\end{pmatrix}.
\end{align*}
\normalsize
To solve \eqref{eq:hm_bm_diffeq} asymptotically, we consider $C$ as a matrix with entries in the Weyl
algebra, that is, the noncommutative ring of linear differential operators with polynomial coefficients,
see \cite{lam_1991}. We can perform a division-free Gauss elimination over this ring to transform $C$
into row echelon form, which gives a single differential equation only involving $E_2(z)$. The actual
computations were performed using the computer algebra system
\texttt{FriCAS}\footnote{\url{http://fricas.sourceforge.net}}. The result is a differential equation of
order 7 for $E_2(z)$:

\begin{align}
\label{eq:hm_bm_diffeq_e2}
\begin{split}
{\textstyle
\frac{81}{8} (z + 2) (z - 1)^5 z^6
    \frac{\partial^7}{\partial z^7}}\\[0.3em]
{\textstyle
+ \frac{1269}{4} (z - 1)^4 z^5 \left(z^2 + z - \frac{76}{47}\right)
    \frac{\partial^6}{\partial z^6}}\\[0.3em]
{\textstyle
+ \frac{27531}{8} (z - 1)^3 z^4 \left(z^3 - \frac{24}{437} z^2 - \frac{7149}{3059} z + \frac{3826}{3059}\right) 
    \frac{\partial^5}{\partial z^5}}\\[0.3em]
{\textstyle
+ \frac{127725}{8} \cdot (z - 1)^2 \cdot z^3 \cdot \left(z^4 - \frac{50039}{42575} z^3 - \frac{82401}{42575} z^2 + \frac{132307}{42575} z - \frac{38554}{42575}\right)
    \frac{\partial^4}{\partial z^4}}\\[0.3em]
{\textstyle
+ 31785 (z - 1) z^2 \left(z^5 - \frac{100697}{42380} z^4 - \frac{1164}{10595} z^3 + \frac{36215}{8476} z^2 - \frac{5651}{1630} z + \frac{6234}{10595}\right)
\frac{\partial^3}{\partial z^3}}\\[0.3em]
{\textstyle
+ 23970 z \cdot \left(z^6 - \frac{117579}{31960} z^5 + \frac{114057}{31960} z^4 + \frac{15053}{6392} z^3 - \frac{208329}{31960} z^2 + \frac{59229}{15980} z - \frac{1243}{3995}\right)
    \frac{\partial^2}{\partial z^2}}\\[0.3em]
{\textstyle
+ 4935 \left(z^6 - \frac{1354}{329} z^5 + \frac{1843}{329} z^4 - \frac{4479}{3290} z^3 - \frac{12209}{3290} z^2 + \frac{1466}{329} z - \frac{32}{329}\right)
    \frac{\partial}{\partial z}}\\[0.3em]
{\textstyle
+ 105 \left(z^5 - \frac{494}{105} z^4 + \frac{881}{105} z^3 - \frac{201}{70} z^2 + \frac{4411}{210} z + \frac{1006}{105}\right) = 0}
\end{split}
\end{align}
Using singularity analysis for linear differential equations, we can derive asympotics of $e_2(k)$. See \textsc{Flajolet and Sedgewick} \cite[Theorem VII.10]{flajolet_sedgewick} for details.
The coefficient of the highest order term $\frac{\partial^7}{\partial z^7}$ is given by
\begin{equation*}
\frac{81}{8} (z + 2) (z - 1)^5 z^6,
\end{equation*}
hence the \emph{dominant non-zero singularity} $\xi$ is equal to $1$. 
Since all coefficients in \eqref{eq:hm_bm_diffeq_e2} are
given in factorized form, it is immediate that $\xi$ is a \emph{regular singularity}.
Calculating the \emph{indicial polynomial} for the singularity $\xi$ gives
\begin{equation*}
I_\xi(\theta) = \theta^{7} - 17 \theta^{6} + 99 \theta^{5} - 187 \theta^{4} - 220 \theta^{3} + 1044 \theta^{2} - 720 \theta.
\end{equation*}
For the definition of a regular singularity and the indicial polynomial,
see once again \textsc{Flajolet and Sedgewick} \cite[Chapter VII.9]{flajolet_sedgewick}.
The roots of $I_\xi(\theta)$ are $-2, 0, 1, 3, 4, 5$ and $6$. Since they differ by
integers, the asymptotics of $e_2(k)$ is given by
\begin{equation*}
e_2(k) \sim c\cdot \xi^{-k} k^\beta \log^l k,
\end{equation*}
where $l$ is an integer and $\beta$ is the biggest solution of the equation 
$I(-\beta -1) = 0$, see \cite[page 521, equation 118]{flajolet_sedgewick}.
The $\sim$ sign means ``approximately equal'' (in the precise sense that the ratio
of both terms tends to $1$ as $k$ gets large).

In our case $\beta=1$, and we have
\begin{equation}
\label{eq:constant_l}
e_2(k) \sim c\cdot k \log^l k,
\end{equation}
and this proves the desired.
\end{proof}
While it is not known how to calculate the constant $l$ in \eqref{eq:constant_l}
in the general case, from Lemma \ref{lem:most_lin} we already
know that $e_2(k)$ grows at most linearly, hence $l=0$. Therefore, Lemma
\ref{lem:most_lin} and Lemma \ref{lem:least_lin} together imply Theorem
\ref{lem:harmonic_measure}.

\textbf{Acknowledgements:} We are grateful to Franz Lehner for interesting discussions 
and for helping with the computer computations in the proof of Lemma \ref{lem:least_lin},
and also to the anonymous referee whose comments led to an essential improvement of the paper.

The research of Wilfried Huss and Ecaterina Sava was partially supported by the 
Austrian Science Fund (FWF): W1230-N13.

\newcommand{\etalchar}[1]{$^{#1}$}

\end{document}